\newcommand{\blind}{1}
\theoremstyle{plain} 
\newtheorem{theorem}{Theorem}
\newtheorem{lemma}{Lemma}
\newtheorem{definition}{Definition}[section]
\begin{document}

\def\spacingset#1{\renewcommand{\baselinestretch}%
{#1}\small\normalsize} \spacingset{1}


\if1\blind
{
  \title{\bf The Lindeberg\textendash Feller and Lyapunov Conditions in Infinite Dimensions}
  \author{Julian Morimoto \thanks{\textit{This project has not been funded.}}\hspace{.2cm}}
  \maketitle
} \fi

\if0\blind
{
  \bigskip
  \bigskip
  \bigskip
  \begin{center}
    {\LARGE\bf The Effect of Sample Size and Missingness on Inference with Missing Data
}
\end{center}
  \medskip
} \fi

\bigskip
\begin{abstract}
This paper makes 3 contributions. First, it generalizes the Lindeberg\textendash Feller and Lyapunov Central Limit Theorems to Hilbert Spaces by way of $L^2$. Second, it generalizes these results to spaces in which sample failure and missingness can occur. Finally, it shows that satisfaction of the Lindeberg\textendash Feller Condition in such spaces guarantees the consistency of all inferences from the partial functional data with respect to the completely observed data. These latter two results are especially important given the increasing attention to statistical inference with partially observed functional data. This paper goes beyond previous research by providing simple boundedness conditions which guarantee that \textit{all} inferences, as opposed to some proper subset of them, will be consistently estimated. This is shown primarily by aggregating conditional expectations with respect to the space of missingness patterns. This paper appears to be the first to apply this technique.
\end{abstract}

\noindent%
{\it Keywords:} Non-Parametric Inference; Hilbert Space Random Variables; Asymptotic Normality; Missing Data; Partial Functional Data
\vfill

\newpage
\spacingset{1.9} 

\section{Introduction}
The Lindeberg\textendash Feller and Lyapunov Central Limit Theorems are two important results in classical parametric (i.e. finite-dimensional) inference. They give simple conditions that, if satisfied, guarantee the asymptotic normality of a sum of independent, but not necessarily identically distributed random variables \cite{durrett_probability_2019}. These results help researchers better understand the distribution of error terms, thereby enabling them to make clear statements about how uncertain they are about some estimate. 

Given the usefulness of the Lindeberg\textendash Feller and Lyapunov theorems in finite dimensional settings, a natural question to ask is whether these results also hold in infinite dimensions. This is particularly important for the typical problem in non-parametric inference in which the "parameter" of interest is not a vector in $\mathbb{R}^n$, but a function \cite{gine_mathematical_2021}. Solving this problem is becoming increasingly important in modern-day data analysis, where new technologies produce large datasets with many more dimensions than samples \cite{sur_modern_2019}. Common examples of such data are those that can be modeled as curves or functions, such as ECG data of patients over time, spectrometry recordings over various wavelengths, and temperature recordings over different places \cite{chakraborty_spatial_2014}. Researchers have thus spent considerable time studying asymptotic normality in the infinite-dimensional context to help people analyze such datasets \cite{gine_mathematical_2021, kandelaki_central_1964, osipov_infinite-dimensional_1985}. This paper builds on these well-studied inquiries by offering infinite-dimensional analogues of two well-known asymptotic normality results in finite-dimensions: the Lindeberg\textendash Feller and Lyapunov Central Limit Theorems.

To do this, it starts by reformulating the Lindeberg\textendash Feller and Lyapunov conditions using the terminology of Hilbert Spaces, specifically $L^2$. This is the first main result of this paper. This reformulation adheres closely to classical procedures for non-parametric inference that rely on Hilbert Spaces \cite{gine_mathematical_2021, kraus_components_2015, kraus_inferential_2019}, as well as to previous attempts to generalize these theorems to infinite-dimensional contexts \cite{kandelaki_central_1964}. Then, following the approach of \cite{durrett_probability_2019}, this paper analyzes the asymptotic properties of the characteristic functionals of sums of infinite-dimensional random variables that are independent but not necessarily identically distributed. However, this paper departs from \cite{durrett_probability_2019} in that it uses an infinite-dimensional alternative to the L\'evy Continuity Theorem given in \cite{feldman_short_1965, gross_integration_1960}. This is necessary as the L\'evy Continuity Theorem does not hold in infinite dimensions where the tightness of measures is not guaranteed. Indeed, this is a key reason why the extension of these two limit theorems to infinite-dimensions is not a trivial exercise. 

Furthermore, this paper goes beyond existing scholarship by providing two additional results. First, these two theorems also hold in spaces in which sample failure/missingness can occur. Second, if the Lindeberg\textendash Feller condition is satisfied in these spaces, then all inferences drawn from the partially observed functional data will be asymptotically identical to those that would have been drawn from the complete data. These are the second and third main results of this paper, respectively. They are especially important in light of the recently heightened attention to statistical inference with partially observed functional data \cite{kraus_components_2015, delaigle_approximating_2016, yang_asymptotic_2020, kneip_optimal_2020, kraus_inferential_2019, wang_does_2019}.

These latter two results also go well beyond this scholarship in a crucial way. Previous scholarship provided loosely connected conditions that guarantee the consistency of a proper subset of inferences such as the mean function and the covariance operators. In contrast, this paper provides simple boundedness conditions that guarantee the consistency of \textit{all} possible inferences that one can draw from a partially observed dataset. All inferences will be asymptotically identical to those one would have drawn from a complete one. Under these conditions, the distribution of the sum of partially observed high-dimensional random elements will be asymptotically equivalent to the distribution that would have been observed without missingness/sample failure. This is shown by aggregating conditional expectations with respect to the space of missingness patterns, a technique that appears to have not been used in the literature on inference with partial functional data. Thus, \textit{all} possible inferences that one can draw from the distribution generated by the partially observed elements will approximate the "true" inferences. These include, not only mean functions, covariance operators and eigenelements, but also hypothesis tests results and confidence intervals \cite{kraus_components_2015, kraus_inferential_2019}. 

While previous scholarship suggests that these results may be generalized to Hilbert Spaces other than $L^2$ (such as Reproducing Kernel Hilbert Spaces and Sobolev spaces) \cite{kraus_components_2015, kraus_inferential_2019}, this paper follows the approach of \cite{kraus_components_2015, kraus_inferential_2019} by focusing on $L^2$ spaces. This is for simplicity of presentation, and to introduce some techniques, such as aggregating conditional expectations with respect to the space of missingness patterns, that can be used in future research to prove analogous results in other kinds of Hilbert Spaces. 

\subsection{The Results of Kandelaki and Sozanov, Gin\'e and Le\'on, and Others}

\subsubsection{Kandelaki and Sozanov}

The first main result in this paper is similar to the main result in \cite{kandelaki_central_1964}; however, there is a crucial difference. \cite{kandelaki_central_1964} proved a version of the Lindeberg\textendash Feller Theorem that requires an extra condition that the result in this paper does not require. The Theorem in \cite{kandelaki_central_1964} requires that the Lindeberg\textendash Feller Condition be satisfied by way of a normalizing sequence of bounded linear operators, $\{A_n\}_{n \in \mathbb{N}}$. Formally, the version of the Lindeberg\textendash Feller condition given in \cite{kandelaki_central_1964} is: 

$$\forall \varepsilon > 0, \lim_{n \to \infty}\sum_{k=1}^n\int_{\|A_n h\|_{L^2(\mu)} \geq \varepsilon}\|A_nh\|_{L^2(\mu)}^2 dQ_{X_k}(h) = 0$$

where

\begin{enumerate}
    \item $h$ is an element of a Hilbert Space, $H$
    \item $\{X_k\}_{k \in \mathbb{N}}$ is a sequence of independent $H$-valued random elements
    \item $\{A_n\}_{n \in \mathbb{N}}$ is a sequence that normalizes $X_k$ with respect to a certain linear, symmetric, non-negative, completely continuous operator with finite trace
    \item $Q_{X_k}$ is the probability measure defined by $X_k$
\end{enumerate}

This is different from the classical version of the Lindeberg\textendash Feller condition which does not require a specific transformation of the random variables: 

$$\forall \varepsilon > 0, \lim_{n \to \infty} \sum_{m=1}^n \int_{\|X_{n,m}\|_2 \geq \varepsilon}\|X_{n,m}\|_2^2dP_{n,m} = 0$$ 

where $\{X_{n,m}\}_{n,m}$ is a sequence of independent random elements, and $P_{n,m}$ is the probability measure induced by $X_{n,m}$. In contrast to \cite{kandelaki_central_1964}, this paper proves an infinite-dimensional version of the Lindeberg\textendash Feller Theorem that eschews this additional requirement regarding certain normalizing linear operators. 

Proving a Hilbert Space version of the Lindeberg\textendash Feller Theorem that does not require a sequence of normalizing linear operators has two important consequences. First, it results in a theorem that is more general than that provided in \cite{kandelaki_central_1964}. This paper shows that \textit{any} distribution of a sum of independent random variables that satisfies the Lindeberg\textendash Feller condition will be asymptotically normal\textemdash even those that did not first undergo the transformations required by the Theorem in \cite{kandelaki_central_1964}. Second, the Theorem in this paper offers a much simpler set of conditions that need to be checked to obtain asymptotic normality. Researchers do not have to find or construct a sequence of bounded linear operators that normalize the random variables with respect to a certain linear, symmetric, non-negative, completely continuous operator with finite trace, which \cite{kandelaki_central_1964} requires. As long as a set of random variables satisfies the Lindeberg\textendash Feller condition, their sum will be asymptotically normal.

\subsubsection{Gin\'e and Le\'on}

The first main result in this paper is also similar to Corollary 4.3 in \cite{gine_central_1980}, however, the latter result also requires an additional, and quite strong boundedness condition that the main result in this paper does not require. A simplified version of the boundedness condition in Corollary 4.3 (condition iii; and condition ii in Theorem 4.2) is as follows: 

\begin{align*}
\lim_{n \to \infty} \lim_{N \to \infty} \sum_{m=1}^{n} \sum_{i = N}^{\infty} \mathbb{E}_{n,m}[\langle \chi_{n,m}, e_i \rangle ^2] = 0
\end{align*}

Where $\{e_i\}_{i \in \mathbb{N}}$ is some orthonormal basis of a Hilbert Space, $H$, and where \\ $\{\chi_{n,m}\}_{n \in \mathbb{N}, m \leq n}$ is a collection of independent random elements of $H$ with expectation $\textbf{0}$. What this condition essentially means is that the sum of the sizes of the "directional tails" of the Hilbert Space random elements must tend to zero in a particular way. 

While this condition is useful in that it guarantees a kind of sparsity, it may be difficult to check in practice for two reasons. First, one must find the right orthonormal basis, and order it in the right way, to verify this bound. Finding this basis may not be an easy computational task. Second, one must essentially verify that the random elements become very sparse, very quickly. This is because the double limit implies that the summation must tend to zero even when $n$ grows very quickly relative to $N$. The rate at which $n$ grows can be thought of as the rate at which the "directional tails" of the Hilbert Space random elements are being aggregated, whereas the rate at which $N$ grows can be thought as the rate of shrinkage of the number of these directional tails. Thus, even in cases where the number of directions of the random Hilbert Space elements appears to barely shrink relative to the number of the "directional tail sizes" being added, the total summation of the size of these tails must converge to zero. This condition can be quite strong, and it may be computationally difficult to check for all relative rates of $N$ and $n$. In contrast, the Theorem in this paper does not require such a bound, and thus avoids these practical difficulties associated with Corollary 4.3 in \cite{gine_central_1980}. 

Similar results are explored in other works, though they are not as general or as easily applicable as the main results of this paper. Chapter 2 in \cite{prokhorov_limit_2000} explores various central limit theorems in Hilbert Spaces, but is limited to independent and identically distributed random elements, whereas this paper does not require the elements to be identically distributed. Theorem 5.9 of \cite{gine_mathematical_2021} is a central limit theorem for Banach Spaces, however the authors note that the conditions for this theorem are hard to verify in general (indeed, the mechanics of this theorem are similar to those in Corollary 4.3 discussed above). In contrast, this paper provides a simple set of easily verifiable boundedness conditions that guarantee asymptotic normality. 

\section{Notation and Setup}
\label{notset}

The setup is as follows. All integrals are Bochner integrals unless stated otherwise. All references to the Fubini\textendash Tonelli Theorem are with respect to its formulation for Bochner integrals \cite{mikusinski_fubini_1978}. All Banach spaces on which the Bochner Integral operates are assumed to satisfy the Radon\textendash Nikodym Property. 

\begin{enumerate}
    \item $\{\chi_{n,m}\}_{n \in \mathbb{N}, m \leq n}$ is a collection of independent random elements of the separable Hilbert Space, $H = L^2([0,1], \mathcal{B}([0,1]), \mu)$, defined on the probability spaces, $\{( \Omega, \mathcal{F}, P_{n,m})  \}_{n \in \mathbb{N}, m \leq n}$, such that $\chi_{n,m} : \Omega \times [0,1] \to \mathbb{R}$, $\mathbb{E}_{n,m}[\chi_{n,m}(w, \cdot)] = \\ \int_{w \in \Omega}\chi_{n,m}(w,\cdot)dP_{n,m} = \mathbf{0}$, and $\Omega$ is separable. $\mu$ is Lebesgue measure and independent of $P_{n,m}$. 

    \item $\forall n \in \mathbb{N}, m \leq n, \int_{\Omega \times [0,1]^2}|\chi_{n,m}(w, \cdot) \bigotimes \chi_{n,m}(w, \cdot)|d(P_{n,m} \times \mu \times \mu) < \infty$
    
    \item Consistent with other literature on non-parametric inference, $\forall n \in \mathbb{N}$, $m \leq n, (x,y) \in [0,1]^2$,  $\mathcal{K}_{n,m}(x, y) := \mathbb{E}_{n,m}[\chi_{n,m}(w, x, y)] := \mathbb{E}_{n,m}[\chi_{n,m}(w, x)\chi_{n,m}(w, y)]$ is the covariance of the Hilbert Space random variable $\chi_{n,m}$ at $(x,y)$ \cite{gine_mathematical_2021}. This is equivalent to the definition of covariance given in \cite{hsing_theoretical_2015}: $\mathcal{K}^*_{n,m} := \mathbb{E}_{n,m}[(\chi - m_{n,m}) \bigotimes (\chi - m_{n,m})] = \int_{\Omega} (\chi - m_{n,m}) \bigotimes (\chi - m_{n,m}) dP_{n,m}$, where $m_{n,m} = \mathbb{E}_{n,m}[\chi] := \int_{\Omega} \chi dP_{n,m}$ and $\bigotimes$ is the tensor product operator. 
\end{enumerate}

When a realization of a Hilbert Space random element, $\xi \in H$, is integrated over the space $[0,1]^2$, it means that $\xi \bigotimes \xi$ is integrated over $[0,1]^2$. Thus, for $\xi_1, \xi_2 \in H$, $\langle \xi_1, \xi_2 \rangle_{L^2(\mu \times \mu)} = 
\int_{(x,y) \in [0,1]^2} \xi_1(x) \xi_1(y) \xi_2(x) \xi_2(y)$, and $\|\cdot\|_{L^2(\mu \times \mu)}$ adopts an analogous meaning.

All proofs are in the Appendix unless stated otherwise. 

\section{Asymptotic Normality}
This section discusses the first main result of this paper: Hilbert Space analogues of the Lindeberg\textendash Feller and Lyapunov Central Limit Theorems. The classical Lindeberg\textendash Feller and Lyapunov Theorems require the L\'evy Continuity Theorem. But this Theorem fails in infinite dimensional settings, where tightness of measures is not guaranteed. Lemmas \ref{lemma:grossconthm} and \ref{lemma:taucont} are meant to address this unique infinite-dimensional challenge. Lemma \ref{lemma:grossconthm} provides an alternative to the classical L\'evy Continuity Theorem in Hilbert Spaces, and Lemma \ref{lemma:taucont} shows that the natural extension of characteristic functions in the classical Lindeberg-Feller Theorem to Hilbert Spaces has a nice topological structure that allows for the application of Lemma \ref{lemma:grossconthm}. Lemmas \ref{lemma:grossconthm} and \ref{lemma:taucont} require the following definitions from \cite{feldman_short_1965, gross_integration_1960}.

\begin{definition}[Characteristic Functional/Characteristic Function]
For an $H$-valued random element $\chi$, the characteristic functional of $\chi$ is given by $\phi_{\chi}(\psi):= \mathbb{E}[e^{i\langle \psi, \chi \rangle_{L^2(\mu)}}]$ where $\psi \in H$. This will also be referred to as the characteristic function of $P$, where $P$ is the probability measure that characterizes the distribution of $\chi$.
\end{definition}

\begin{definition}[The $H_2$ Topology]
The $H_2$ topology of $H$ is the weakest topology on $H$ such that all Hilbert-Schmidt operators on $H$ are continuous from $H_2$ to $H_0$.
\end{definition}

\begin{definition}[Uniformly $\tau$-continuous near zero]
A map $\phi : H \to \mathbb{R}$ is uniformly $\tau$-continuous near zero if there exists a sequence $\{A_n\}_{n \in \mathbb{N}}$ of Hilbert-Schmidt operators such that $\|A_n\|_2 \to 0$ (where $\|\cdot\|_2$ is the Hilbert-Schmidt norm), and such that $\phi$ is uniformly continuous on the topology $H_2$ on each of the sets $U_n = \{\psi \in H : \|A_n(\psi)\| < 1\}$.
\end{definition}

The continuity results that follow from these definitions will now be stated.

\begin{lemma}[Gross' Continuity Theorem]
\label{lemma:grossconthm}
Let $\{\phi_j\}_{j \in \mathbb{N}}$ be characteristic functions of some probability measures $\{P_j\}_{j \in \mathbb{N}}$ on a Hilbert Space. If $\exists \phi : H \to \mathbb{R}$, a function that is uniformly $\tau-$continuous near 0 such that $\phi_j \to \phi$ in probability, then $P_j \to P$ weakly where $\phi$ is the characteristic function of $P$. 
\end{lemma}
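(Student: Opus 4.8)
The plan is to run the classical two-step template for continuity theorems — first establish tightness of the family $\{P_j\}_{j \in \mathbb{N}}$, then identify the weak limit — with the $H_2$ (Sazonov-type) topology and the Hilbert--Schmidt operators appearing in the definition of ``uniformly $\tau$-continuous near zero'' doing the work that automatic tightness does in the finite-dimensional L\'evy theorem. Concretely, this is the argument of \cite{feldman_short_1965, gross_integration_1960} adapted to a convergent sequence of characteristic functionals, together with the extra bookkeeping needed because the convergence $\phi_j \to \phi$ is only in probability.

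First I would prove that $\{P_j\}$ is tight. Fix $\delta > 0$, and take the sequence $\{A_n\}$ of Hilbert--Schmidt operators with $\|A_n\|_2 \to 0$ on whose neighbourhoods $U_n = \{\psi : \|A_n\psi\| < 1\}$ the limit $\phi$ is uniformly $H_2$-continuous. The engine is the standard estimate: for a probability measure $\nu$ on $H$ with characteristic functional $f$ and a symmetric positive trace-class operator $S$,
\[
\nu\bigl(\{x : \|S^{1/2} x\| > r\}\bigr) \;\le\; C_r \int_H \bigl(1 - \operatorname{Re} f(\psi)\bigr)\, d\gamma_S(\psi),
\]
where $\gamma_S$ is the centred Gaussian cylinder measure with covariance $S$. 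Choosing $S = A_n^* A_n$ (trace class precisely because $A_n$ is Hilbert--Schmidt) and using the uniform $H_2$-continuity of $\phi$ near $0$ together with $\phi_j \to \phi$, I would show the right-hand side is $< \delta\, 2^{-n}$ uniformly in $j$ once $r = r_n$ is large enough. The ellipsoids $K_n = \{x : \|(A_n^* A_n)^{1/2} x\| \le r_n\}$ are compact — this is exactly where the Hilbert--Schmidt / trace-class hypothesis is indispensable — so $K = \bigcap_n K_n$ is compact with $P_j(K) \ge 1 - \delta$ for every $j$, giving tightness.

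Then I would identify the limit. By Prokhorov's theorem the tight family $\{P_j\}$ is relatively weakly compact; let $P$ be the weak limit along a subsequence $\{j_k\}$. Weak convergence yields $\phi_{j_k}(\psi) \to \phi_P(\psi)$ for every $\psi \in H$, by bounded convergence applied to $x \mapsto e^{i\langle \psi, x\rangle}$. Comparing this with $\phi_j \to \phi$ in probability (with respect to the reference Gaussian measure in Gross's construction, whose support properties force agreement of the two limits) gives $\phi_P = \phi$, so $\phi$ is a genuine characteristic function; and since characteristic functionals determine Borel probability measures uniquely on a separable Hilbert space, every subsequential weak limit equals the same $P$. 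A relatively compact sequence with a unique subsequential limit converges, so $P_j \to P$ weakly.

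The hard part is the tightness step, specifically making the smallness of $1 - \operatorname{Re}\phi_j$ on the neighbourhoods governed by $A_n^* A_n$ uniform in $j$. Two points need care: (i) the operators $A_n^* A_n$ must be trace class, so that the associated ellipsoids are compact — this is the sole role of the Hilbert--Schmidt structure in the hypothesis — and (ii) because $\phi_j \to \phi$ only in probability, and not, say, uniformly on bounded sets, one is forced to argue along subsequences and to lean on the uniform bound $|\phi_j| \le 1$ to pass limits through the Gaussian integrals, and then to assemble a single compact set that works for all $j$ at once. Everything else (Prokhorov, weak-continuity of $Q \mapsto \widehat Q(\psi)$, uniqueness of characteristic functionals on a separable Hilbert space) is standard.
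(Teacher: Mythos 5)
The paper does not actually prove this lemma; it cites Theorem 2 of Feldman (1965). Your proposal is therefore necessarily a different route: a from-scratch reconstruction along the standard Prokhorov/Sazonov template (tightness via integration of $1-\operatorname{Re}\phi_j$ against trace-class Gaussian measures, then identification of the limit via relative compactness and uniqueness of characteristic functionals on a separable Hilbert space). The identification step is fine, and so is your device for handling the weak mode of convergence of $\phi_j$ (passing to the limit inside the Gaussian integrals for large $j$, and using the individual tightness of each $P_j$ on the Polish space $H$ to absorb the finitely many remaining indices).

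The tightness step, however, contains a genuine gap at the point where you assert that the sets $K_n=\{x:\|(A_n^*A_n)^{1/2}x\|\le r_n\}$ are compact. They are not: $\|(A_n^*A_n)^{1/2}x\|^2=\langle A_n^*A_nx,x\rangle=\|A_nx\|^2$, and since $A_n$ is a compact operator there are unit vectors $u$ with $\|A_nu\|$ arbitrarily small, so $K_n$ contains $tu$ for arbitrarily large $t$ and is unbounded. Worse, the hypothesis $\|A_n\|_2\to 0$ works directly against you: if the $r_n$ stay bounded away from $0$, then $\bigcap_nK_n$ contains a ball of radius $\inf_n r_n/\sup_n\|A_n\|_2$ and is likewise non-compact. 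The compact ellipsoids of $H$ have the opposite orientation, namely $\{x:\sum_k\langle x,e_k\rangle^2/c_k\le 1\}$ with $c_k\to 0$, i.e.\ sublevel sets of $\langle S^{-1}x,x\rangle$ rather than of $\langle Sx,x\rangle$ for a trace-class $S$. Your engine estimate $\nu(\langle Sx,x\rangle>r^2)\le(1-e^{-r^2/2})^{-1}\int_H(1-\operatorname{Re}f)\,d\gamma_S$ only excludes mass from the complement of a fat, unbounded set, and the smaller you make $\|A_n\|_2$ (which is precisely what makes the Gaussian integral small via the $H_2$-continuity of $\phi$ near zero) the less geometric information that bound carries. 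To close the gap you must convert the smallness of $\int(1-\operatorname{Re}\phi_j)\,d\gamma_{A^*A}$ into uniform-in-$j$ control of the weighted quantities $\sum_k\lambda_k\,\mathbb{E}_{P_j}[\min(1,\langle x,e_k\rangle^2)]$ in the eigenbasis of $A^*A$, and then assemble compact sets of the correct orientation from the decay of the eigenvalues $\lambda_k$; this is the actual content of the Sazonov--Feldman argument, and it is exactly the piece your sketch replaces with an incorrect compactness claim.
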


\begin{proof}
Proof given in Theorem 2 of \cite{feldman_short_1965}.
\end{proof}

\begin{lemma}
\label{lemma:taucont}
If $\exists \varsigma \in L^2 (\mu \times \mu) : \sum_{m=1}^n \mathcal{K}_{n,m}$ converges to $\varsigma$ with respect to $\| \cdot \|_{L^2(\mu \times \mu)}$, then, $G: L^2(\mu \times \mu) \to \mathbb{R}$ such that $G(\psi) = \langle \varsigma, \psi \rangle _{L^2(\mu \times \mu)}$ is uniformly $\tau$-continuous near zero.
\end{lemma}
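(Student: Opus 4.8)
The plan is to read $G$ (via the integration convention of Section~\ref{notset}) as the quadratic functional on $H=L^2(\mu)$ given by $G(\psi)=\langle\varsigma,\psi\rangle_{L^2(\mu\times\mu)}=\int_{[0,1]^2}\varsigma(x,y)\,\psi(x)\psi(y)\,d(\mu\times\mu)$, and to exhibit an explicit sequence $\{A_n\}$ of Hilbert--Schmidt operators witnessing uniform $\tau$-continuity near zero. First I would extract from the hypothesis the operator structure: since $\sum_{m=1}^{n}\mathcal K_{n,m}\to\varsigma$ in $\|\cdot\|_{L^2(\mu\times\mu)}$ and each $\mathcal K_{n,m}$ is a symmetric positive-semidefinite kernel, the integral operator $S$ on $H$ with kernel $\varsigma$ is self-adjoint and positive (the partial sums converge to $S$ in Hilbert--Schmidt norm, hence in operator norm, and self-adjointness and positivity pass to the limit), and $G(\psi)=\langle S\psi,\psi\rangle_{H}$. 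I would then record the key preliminary fact that $S$ is of trace class, so that $S^{1/2}$ is Hilbert--Schmidt with $\|S^{1/2}\|_2^{2}=\operatorname{tr}(S)$; I would derive this from the second-moment/boundedness conditions of Section~\ref{notset} (which make each $\mathcal K_{n,m}$, and hence each partial sum $\sum_{m=1}^{n}\mathcal K_{n,m}$, nuclear) rather than only from the stated $L^2(\mu\times\mu)$-convergence.

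The heart of the argument is the elementary factorization, valid because $S\ge 0$ is self-adjoint on the real Hilbert space $H$,
\[
G(\psi)-G(\psi')=\bigl\langle S^{1/2}(\psi+\psi'),\ S^{1/2}(\psi-\psi')\bigr\rangle_{H},
\]
which by Cauchy--Schwarz and the triangle inequality for the seminorm $\psi\mapsto\|S^{1/2}\psi\|_{H}=\sqrt{G(\psi)}$ gives
\[
\bigl|G(\psi)-G(\psi')\bigr|\ \le\ \bigl(\sqrt{G(\psi)}+\sqrt{G(\psi')}\bigr)\,\bigl\|S^{1/2}(\psi-\psi')\bigr\|_{H}.
\]
Now take $A_n:=\tfrac1n S^{1/2}$. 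These are Hilbert--Schmidt with $\|A_n\|_2=\tfrac1n\sqrt{\operatorname{tr}(S)}\to 0$, and $U_n=\{\psi:\|A_n\psi\|<1\}=\{\psi:\sqrt{G(\psi)}<n\}$, so on $U_n$ the displayed bound reads $|G(\psi)-G(\psi')|\le 2n\,\|S^{1/2}(\psi-\psi')\|_{H}$. Hence, for $\varepsilon>0$, the Hilbert--Schmidt operator $B:=S^{1/2}$ and the number $\delta:=\varepsilon/(2n)$ satisfy: whenever $\psi,\psi'\in U_n$ and $\|B(\psi-\psi')\|<\delta$, then $|G(\psi)-G(\psi')|<\varepsilon$. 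Since $\{(\psi,\psi'):\|B(\psi-\psi')\|<\delta\}$ is a basic entourage of the uniformity induced by the $H_2$ topology, this is exactly uniform $H_2$-continuity of $G$ on each $U_n$, i.e.\ $G$ is uniformly $\tau$-continuous near zero. (If downstream one instead needs the Gaussian characteristic functional $\psi\mapsto e^{-G(\psi)/2}$, it inherits the property, since $x\mapsto e^{-x/2}$ is $\tfrac12$-Lipschitz on $[0,\infty)$ and $G\ge 0$, so the same $A_n$ and $B$ with $2\delta$ work.)

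I expect the main obstacle to be that $G$ grows quadratically, not linearly: its modulus of continuity in the $H_2$ uniformity a priori degrades as $\|\psi\|_{H}\to\infty$, while the sets $U_n$ grow without bound (indeed $\bigcup_n U_n=H$), so no single $H_2$-neighbourhood of $0$ can control $G$ --- which is precisely the situation the definition of "uniformly $\tau$-continuous near zero" is built for. The device above sidesteps this by choosing $A_n$ proportional to $S^{1/2}$, so that membership in $U_n$ \emph{directly} bounds $\sqrt{G(\psi)}$ and thereby tames the first factor in the bound, leaving a genuinely $H_2$-small quantity as the second factor. The one point requiring real care is the trace-class property of $S$: it is what makes $A_n=\tfrac1n S^{1/2}$ Hilbert--Schmidt with vanishing norm, and it cannot be dropped --- for a positive quadratic form $G$, being uniformly $\tau$-continuous near zero is equivalent to $e^{-G/2}$ being the characteristic functional of an honest Gaussian measure on $H$, which forces $S$ to have finite trace --- so I would establish it carefully from the standing assumptions before running the rest of the proof.
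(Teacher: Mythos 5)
Your core mechanism is correct and is a genuinely different route from the paper's. The paper invokes Example 1 of Section 1 of Gross's integration paper: it claims $G=r(K\psi)$ with $K$ the Hilbert--Schmidt integral operator with kernel $\varsigma$ and $r$ uniformly continuous on bounded sets, and lets that cited criterion do all the work (note that as written the paper's $r(\cdot)=\int_{[0,1]}(\cdot)\psi$ depends on the argument $\psi$ of $G$ itself, so the factorization is not literally of the required form and the uniform-continuity constant $\|\psi\|_{L^2(\mu)}$ is unbounded on $H$). You instead verify the definition of uniform $\tau$-continuity near zero directly: the identity $G(\psi)-G(\psi')=\langle S^{1/2}(\psi+\psi'),S^{1/2}(\psi-\psi')\rangle_H$, the choice $A_n=\tfrac1n S^{1/2}$ so that $U_n=\{\sqrt{G(\psi)}<n\}$, and the resulting bound $|G(\psi)-G(\psi')|\le 2n\|S^{1/2}(\psi-\psi')\|_H$ are all correct, and the last quantity is an $H_2$-seminorm of $\psi-\psi'$, so uniform $H_2$-continuity on each $U_n$ follows. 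This buys an explicit, self-contained witness $\{A_n\}$ and makes transparent exactly which structural property of $\varsigma$ is being used; the paper's appeal to the cited example hides this.

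The one genuine gap is the step you yourself flag: the trace-class property of $S$. Your sketched derivation --- each $\mathcal K_{n,m}$, hence each partial sum $\sum_{m\le n}\mathcal K_{n,m}$, is nuclear --- does not suffice, because nuclearity does not pass to a limit taken in the Hilbert--Schmidt norm: the partial sums can have traces $\sum_{m\le n}\mathbb E_{n,m}[\|\chi_{n,m}\|_{L^2(\mu)}^2]$ tending to infinity while still converging in $\|\cdot\|_{L^2(\mu\times\mu)}$ to a kernel whose operator is Hilbert--Schmidt but not trace class. What closes the gap is a uniform trace bound $\sup_n\sum_{m\le n}\mathbb E_{n,m}[\|\chi_{n,m}\|_{L^2(\mu)}^2]<\infty$, from which $\operatorname{tr}(S)\le\liminf_n\sum_{m\le n}\operatorname{tr}(\mathcal K_{n,m})<\infty$ by Fatou applied to the nonnegative diagonal entries $\langle(\sum_{m\le n}\mathcal K_{n,m})e_i,e_i\rangle$. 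That bound is not among the hypotheses of Lemma~\ref{lemma:taucont} (which assumes only $\varsigma\in L^2(\mu\times\mu)$ and $L^2(\mu\times\mu)$-convergence), though it is implicit in hypothesis 1 of Theorem~\ref{thm:lfnorm}, where the limit of the second-moment sums is assumed to exist. Your own necessity observation --- that uniform $\tau$-continuity of a positive quadratic form forces finite trace via Sazonov's theorem --- shows this is not a removable technicality: the lemma as literally stated fails for a positive $\varsigma$ that is Hilbert--Schmidt but not nuclear, so the missing hypothesis must be imported, not derived. State it explicitly and your proof is complete; as it stands, the derivation of $\operatorname{tr}(S)<\infty$ is the one step that would fail.
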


The Lindeberg\textendash Feller condition and Theorem  will now be stated. 

\begin{definition}[The Lindeberg\textendash Feller Condition on Hilbert Spaces]
$\forall \varepsilon > 0, \\ \lim_{n \to \infty} \sum_{m=1}^n \mathbb{E}_{n,m}[ \|\chi_{n,m} \|_{L^2(\mu)}^2 :  \|\chi_{n,m} \|_{L^2(\mu)} > \varepsilon ] = 0$
\end{definition}

\begin{theorem}[The Lindeberg\textendash Feller Theorem on Hilbert Spaces]
\label{thm:lfnorm}
Suppose

\begin{enumerate}
    \item $\exists \varsigma \in L^2 (\mu \times \mu) : \sum_{m=1}^n \mathcal{K}_{n,m}$ converges to $\varsigma$ with respect to $\| \cdot \|_{L^2(\mu \times \mu)}$, and $\lim_{n \to \infty}\sum_{m=1}^n \mathbb{E}_{n,m}[\|\chi_{n,m}\|_{L^2 (\mu)}^2] > 0$. 
    \item $\forall \varepsilon > 0,  \lim_{n \to \infty} \sum_{m=1}^n \mathbb{E}_{n,m}[ \|\chi_{n,m} \|_{L^2(\mu)}^2 :   \|\chi_{n,m} \|_{L^2(\mu)} > \varepsilon ] = 0 $ \\ (The Lindeberg\textendash Feller Condition).
\end{enumerate}

Then, $S_n = \sum_{m=1}^n\chi_{n,m}$ is asymptotically normal with expectation $\mathbf{0}$ and covariance $\varsigma$.
\end{theorem}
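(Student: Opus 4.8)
The plan is to run the classical characteristic-function proof of Lindeberg--Feller (as in \cite{durrett_probability_2019}), but to replace the L\'evy continuity theorem at the end by Gross' continuity theorem (Lemma \ref{lemma:grossconthm}), whose hypothesis is checked via Lemma \ref{lemma:taucont}. Fix $\psi \in H$ and set $a_{n,m} := \tfrac12\langle \mathcal{K}_{n,m}, \psi\rangle_{L^2(\mu\times\mu)}$. Since $\psi\otimes\psi \in L^2(\mu\times\mu)$ (its norm is $\|\psi\|_{L^2(\mu)}^2$), Fubini--Tonelli for Bochner integrals (legitimate by assumption 2 of Section \ref{notset}) and $\mathbb{E}_{n,m}[\chi_{n,m}] = \mathbf{0}$ give $\mathbb{E}_{n,m}[\langle \psi, \chi_{n,m}\rangle_{L^2(\mu)}] = 0$ and $\mathbb{E}_{n,m}[\langle\psi,\chi_{n,m}\rangle_{L^2(\mu)}^2] = 2a_{n,m} \ge 0$; and since $\sum_{m=1}^n \mathcal{K}_{n,m} \to \varsigma$ in $L^2(\mu\times\mu)$, continuity of the functional $\langle\,\cdot\,,\psi\otimes\psi\rangle_{L^2(\mu\times\mu)}$ yields $\sum_{m=1}^n a_{n,m} \to \tfrac12 G(\psi)$, where $G(\psi) = \langle\varsigma,\psi\rangle_{L^2(\mu\times\mu)}$ is the functional from Lemma \ref{lemma:taucont}. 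The goal is to prove $\phi_{S_n}(\psi) \to e^{-G(\psi)/2}$ for every $\psi$, since pointwise convergence entails convergence in probability with respect to the measure figuring in Lemma \ref{lemma:grossconthm}, and $e^{-G/2}$ has the Gaussian form $\exp(-\tfrac12\langle\varsigma,\psi\rangle_{L^2(\mu\times\mu)})$.

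First I would record the factorization $\phi_{S_n}(\psi) = \prod_{m=1}^n \phi_{\chi_{n,m}}(\psi)$, valid because the real random variables $\langle\psi,\chi_{n,m}\rangle_{L^2(\mu)}$ are independent. Using $|\prod u_m - \prod v_m| \le \sum_m |u_m - v_m|$ for complex numbers of modulus $\le 1$ (here $|\phi_{\chi_{n,m}}(\psi)| \le 1$ and $0 < e^{-a_{n,m}} \le 1$ as $a_{n,m}\ge 0$), together with $|e^{-x} - 1 + x| \le x^2/2$ for $x \ge 0$, the problem reduces to bounding $\sum_m |\phi_{\chi_{n,m}}(\psi) - 1 + a_{n,m}|$ and $\sum_m a_{n,m}^2$.

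For the first sum, apply $|e^{it} - 1 - it + t^2/2| \le \min(|t|^3/6,\, t^2)$ inside the expectation with $t = \langle\psi,\chi_{n,m}\rangle_{L^2(\mu)}$, split the expectation over $\{\|\chi_{n,m}\|_{L^2(\mu)} \le \varepsilon\}$ and its complement, and use $|t| \le \|\psi\|_{L^2(\mu)}\|\chi_{n,m}\|_{L^2(\mu)}$. This gives
\begin{align*}
\sum_{m=1}^n |\phi_{\chi_{n,m}}(\psi) - 1 + a_{n,m}| &\le \frac{\varepsilon\,\|\psi\|_{L^2(\mu)}^3}{6}\sum_{m=1}^n \mathbb{E}_{n,m}\bigl[\|\chi_{n,m}\|_{L^2(\mu)}^2\bigr] \\
&\quad + \|\psi\|_{L^2(\mu)}^2 \sum_{m=1}^n \mathbb{E}_{n,m}\bigl[\|\chi_{n,m}\|_{L^2(\mu)}^2 : \|\chi_{n,m}\|_{L^2(\mu)} > \varepsilon\bigr].
\end{align*}
The second term tends to $0$ by the Lindeberg--Feller condition, and the first is $O(\varepsilon)$ because assumption 1 keeps $\sum_m \mathbb{E}_{n,m}[\|\chi_{n,m}\|_{L^2(\mu)}^2]$ bounded in $n$. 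For $\sum_m a_{n,m}^2 \le (\max_m a_{n,m})\sum_m a_{n,m}$, I would establish the uniform-negligibility bound $\max_m a_{n,m} \to 0$ from the Lindeberg--Feller condition via $\mathbb{E}_{n,m}[\|\chi_{n,m}\|_{L^2(\mu)}^2] \le \varepsilon^2 + \sum_{m'}\mathbb{E}_{n,m'}[\|\chi_{n,m'}\|_{L^2(\mu)}^2 : \|\chi_{n,m'}\|_{L^2(\mu)} > \varepsilon]$, so $\sum_m a_{n,m}^2 \to 0$ since $\sum_m a_{n,m}$ converges. Combining with $\prod_m e^{-a_{n,m}} = e^{-\sum_m a_{n,m}} \to e^{-G(\psi)/2}$ and letting $\varepsilon\downarrow 0$ yields $\phi_{S_n}(\psi)\to e^{-G(\psi)/2}$ for every $\psi$.

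To conclude, I would verify the hypotheses of Lemma \ref{lemma:grossconthm} for the limit $\phi := e^{-G/2}$. Lemma \ref{lemma:taucont} gives that $G$ is uniformly $\tau$-continuous near zero; since $G \ge 0$ and $t\mapsto e^{-t/2}$ is $\tfrac12$-Lipschitz on $[0,\infty)$, $e^{-G/2}$ inherits the same Hilbert--Schmidt operators $\{A_n\}$ and uniform $H_2$-continuity on each $U_n$, hence is uniformly $\tau$-continuous near zero as well. Lemma \ref{lemma:grossconthm} then produces a probability measure $P$ on $H$ with characteristic functional $e^{-G/2}$ to which the laws of $S_n$ converge weakly; the Gaussian form of $e^{-G/2}$ identifies $P$ as the centered Gaussian with covariance $\varsigma$, which is the assertion. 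The main obstacle is exactly this last step: in infinite dimensions pointwise convergence of characteristic functionals does not imply weak convergence, so everything rests on showing that the limit functional has enough regularity (uniform $\tau$-continuity near zero) to feed into Lemma \ref{lemma:grossconthm} — this is what the summability hypothesis in assumption 1, routed through Lemma \ref{lemma:taucont}, supplies, and the delicate points are transferring that regularity from $G$ to $e^{-G/2}$ and confirming that the mode of convergence obtained from the estimates matches the one the continuity theorem demands.
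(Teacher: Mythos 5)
Your proposal is correct and follows essentially the same route as the paper: the classical characteristic-functional argument (third-order Taylor bound, splitting the expectation over $\{\|\chi_{n,m}\|_{L^2(\mu)}\le\varepsilon\}$ and its complement, a product-comparison estimate for complex numbers of modulus at most one), with Gross' continuity theorem (Lemma \ref{lemma:grossconthm}) standing in for L\'evy continuity and Lemma \ref{lemma:taucont} supplying the required regularity. The only differences are cosmetic — you compare $\prod_m\phi_{\chi_{n,m}}(\psi)$ directly to $\prod_m e^{-a_{n,m}}$ rather than to $\prod_m(1-a_{n,m})$ followed by Lemma \ref{lemma:prod}, and you are in fact slightly more careful than the paper in explicitly transferring uniform $\tau$-continuity from $G$ to the limit functional $e^{-G/2}$ before invoking Lemma \ref{lemma:grossconthm}.
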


The Hilbert Space version of the Lyapunov condition will now be stated, and it will be shown that this condition implies the Lindeberg\textendash Feller condition for Hilbert spaces. Thus, the former (which can be much easier to check) can replace the latter in Theorem \ref{thm:lfnorm}.

The Lyapunov Condition is as follows: 

\begin{definition}[The Lyapunov Condition on Hilbert Spaces]
There exists $\delta > 0$ such that $\lim_{n \to \infty} \sum_{m=1}^{n} \mathbb{E}_{n,m}[\|\chi_{n,m}\|_{L^2(\mu)}^{2 + \delta}] = 0$.
\end{definition}

The next Theorem shows that the Lyapunov condition implies the Lindeberg\textendash Feller condition. 

\begin{theorem}[The Lyapunov Theorem on Hilbert Spaces]
\label{thm:lyap}
If there exists $\delta > 0$ such that $\lim_{n \to \infty} \sum_{m=1}^{n} \mathbb{E}_{n,m}[\|\chi_{n,m}\|_{L^2(\mu)}^{2 + \delta}] = 0$, then $\forall \varepsilon > 0$, 

\begin{align*}
    \lim_{n \to \infty} \sum_{m=1}^n \mathbb{E}_{n,m}[ \|\chi_{n,m} \|_{L^2(\mu)}^2 :  \|\chi_{n,m} \|_{L^2(\mu)} > \varepsilon ] = 0
\end{align*}

\end{theorem}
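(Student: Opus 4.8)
The plan is to reduce the statement to the classical scalar Lyapunov-to-Lindeberg implication applied to the nonnegative real random variables $\omega \mapsto \|\chi_{n,m}(\omega,\cdot)\|_{L^2(\mu)}$. The key observation is that every quantity appearing in both conditions is, for each fixed $n,m$, an ordinary Lebesgue integral over $\Omega$ of a nonnegative measurable function, so no infinite-dimensional machinery (and in particular none of Lemmas~\ref{lemma:grossconthm}--\ref{lemma:taucont}) is needed here; the Hilbert-space structure enters only through the already-granted Bochner-integrability in assumption~2 of the setup, which guarantees the relevant moments are well defined.

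First I would fix $\varepsilon>0$ and let $\delta>0$ be as in the Lyapunov condition. For each $n$ and each $m\le n$, on the event $E_{n,m}:=\{\omega\in\Omega:\|\chi_{n,m}(\omega,\cdot)\|_{L^2(\mu)}>\varepsilon\}$ one has the pointwise bound
\begin{align*}
\|\chi_{n,m}(\omega,\cdot)\|_{L^2(\mu)}^2
=\frac{\|\chi_{n,m}(\omega,\cdot)\|_{L^2(\mu)}^{2+\delta}}{\|\chi_{n,m}(\omega,\cdot)\|_{L^2(\mu)}^{\delta}}
\le \varepsilon^{-\delta}\,\|\chi_{n,m}(\omega,\cdot)\|_{L^2(\mu)}^{2+\delta}.
\end{align*}
Integrating this inequality over $E_{n,m}$ against $P_{n,m}$, and then enlarging the domain of integration from $E_{n,m}$ to all of $\Omega$ (legitimate since the integrand is nonnegative), yields
\begin{align*}
\mathbb{E}_{n,m}\big[\|\chi_{n,m}\|_{L^2(\mu)}^2 : \|\chi_{n,m}\|_{L^2(\mu)}>\varepsilon\big]
\le \varepsilon^{-\delta}\,\mathbb{E}_{n,m}\big[\|\chi_{n,m}\|_{L^2(\mu)}^{2+\delta}\big].
\end{align*}

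Next I would sum over $m=1,\dots,n$, using linearity and monotonicity of the integral, to obtain
\begin{align*}
\sum_{m=1}^n \mathbb{E}_{n,m}\big[\|\chi_{n,m}\|_{L^2(\mu)}^2 : \|\chi_{n,m}\|_{L^2(\mu)}>\varepsilon\big]
\le \varepsilon^{-\delta}\sum_{m=1}^n \mathbb{E}_{n,m}\big[\|\chi_{n,m}\|_{L^2(\mu)}^{2+\delta}\big].
\end{align*}
Letting $n\to\infty$, the right-hand side tends to $0$ by hypothesis, and since the left-hand side is nonnegative it is squeezed to $0$ as well; as $\varepsilon>0$ was arbitrary, this is exactly the Lindeberg--Feller condition. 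I do not anticipate a genuine obstacle: the only points requiring care are verifying that $\|\chi_{n,m}\|_{L^2(\mu)}$ is a bona fide measurable real random variable (which follows from separability of $H$ and measurability of $\chi_{n,m}$) and that the $(2+\delta)$-moments are finite so that the bounding inequality is not vacuous — the latter being implicit in the statement of the Lyapunov condition itself, since a sum with an infinite term could not converge to $0$.
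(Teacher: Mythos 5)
Your proposal is correct and follows essentially the same route as the paper's proof: the pointwise bound $\|\chi\|_{L^2(\mu)}^2 \le \varepsilon^{-\delta}\|\chi\|_{L^2(\mu)}^{2+\delta}$ on the exceedance event, followed by enlarging the domain of integration, summing over $m$, and letting $n\to\infty$. The additional remarks on measurability and finiteness of moments are fine but not needed beyond what the paper already assumes.
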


\section{Missingness and Sample Failure}

This section shows the second and third main results of this paper. First, Theorems \ref{thm:lfnorm} and \ref{thm:lyap} are shown to hold on spaces where functional data are partially observed. Second, it is shown that the satisfaction of the Lindeberg\textendash Feller condition in the space of partial observations guarantees the consistency of all inferences drawn from partial functional data. The notation and setup for this regime are as follows:

\subsection{Constructing the Space of Partially Observed Functions}
The elements of the space of partially observed functions will essentially be identical to the elements of the space of completely observed functions, except that some parts of the functions will be labeled as "missing" and replaced with imputed values. Let $ \{\chi_{n,m}\}_{n \in \mathbb{N}, m \leq n}$ be as in Section \ref{notset}. Let $ \{\chi'_{n,m}\}_{n \in \mathbb{N}, m \leq n}$ be a collection of independent random elements of $H$, so that each $\chi_{n,m}'$ is a function from $\Omega \times \mathcal{M} \times [0,1]$ to $ \mathbb{R}$, and is equipped with joint probability measure $P_{n,m} \times \nu_{n,m} \times \mu$, such that:
    
 \begin{enumerate}
        \item $\mathcal{M}$ is a measure space over $\{0,1\}^{[0,1]}$, equipped with probability measures \\ $\{\nu_{n,m}\}_{n \in \mathbb{N}, m \leq n}$, that are independent of Lebesgue measure $\mu$, and such that $\nu_{n,m}$ and $P_{n,m}$ need not be independent. $\mathcal{M}$ is essentially the space of missingness patterns and consists of functions over $[0,1]$ that are equal to $0$ at $x$ when the realized Hilbert Space random element (i.e., when there is some fixed $w \in \Omega$) is "missing" at $x$, and $1$ otherwise. Note that $\mathcal{M}$ is the product of continuum many separable spaces and is therefore separable by the Hewitt\textendash Marczewski\textendash Pondiczery Theorem, thus making $\Omega \times \mathcal{M}$ a separable Hilbert Space.
    
        %
        
        \item For all $n \in \mathbb{N}, m \leq n, w \in \Omega$ and $M \in \mathcal{M}$, $O(\chi_{n,m}'(w,M, \cdot)) := \chi_{n,m}'(w, M, \cdot )\big|_{X_1(M)}$ where $X_1(M) := \{x \in [0,1] : M(x) = 1\}$. This will be the "observed part" of $\chi_{n,m}'(w,M, \cdot)$. Let $X_0(M) := \{x \in [0,1] : M(x) = 0\}$. $\chi_{n,m}'(w, M, \cdot)\big|_{X_0(M)}$ will be the "imputed part" of $\chi_{n,m}'(w,M, \cdot)$.
        %
        
        \item For all $n \in \mathbb{N}, m \leq n, w \in \Omega$ and $M \in \mathcal{M}$,
        
        \begin{align*}
            \chi_{n,m}'(w, M, \cdot) := \chi_{n,m}(w, \cdot) \mathbb{1}_{X_1(M)} + (\mathbb{E}_{n,m}[\chi_{n,m}(w', \cdot) | O(\chi_{n,m}'(w', M, \cdot)) =
            \\
            O(\chi_{n,m}'(w, M, \cdot))] + \mathcal{G}(0, \mathcal{K}_{n,m}^{w,M})(\cdot)) \mathbb{1}_{X_0(M)} 
        \end{align*}
        
        where $\mathcal{G}(0, \mathcal{K}_{n,m}^{w,M})(\cdot)$ denotes a realization of a Gaussian process centered at $\textbf{0}$ with conditional covariance $\mathcal{K}_{n,m}^{w,M}(\cdot, \cdot) := \mathbb{E}_{n,m}[\chi_{n,m}(w,\cdot, \cdot) |O(\chi_{n,m}'(w', M, \cdot)) \\  = O(\chi_{n,m}'(w, M, \cdot)) ]$.  This process essentially imputes the missing portions of the functional data based on the observed portions. The mechanics of this imputation are discussed further below. 
        
        \item For all $n \in \mathbb{N}, m \leq n,$ the covariance operator of $\chi_{n,m}'$ is given by $\mathcal{K}_{n,m}' : = \mathbb{E}_{n,m}'[\chi_{n,m}'(w, M, \cdot, \cdot)] := \int_{\Omega \times \mathcal{M}} \chi_{n,m}'(w, M, \cdot, \cdot) $.

        
\end{enumerate}

\subsection{Assumptions about the Missingness Mechanism}
    
For all $n \in \mathbb{N},$ and $ m \leq n$, $\chi_{n,m}'$ is said to be Missing at Random (MAR) when $\forall w_1, w_2 \in \Omega $ and $ M \in \mathcal{M}$ such that $O(\chi_{n,m}'(w_1, M, \cdot)) =  O(\chi_{n,m}'(w_2, M, \cdot))$, it is the case that $\nu_{n,m} (M | w_1) = \nu_{n,m} (M | w_2)$. This is the infinite dimensional analogue of the (Everywhere) Missing at Random definition in \cite{seaman_what_2013}. The MAR assumption is a classical assumption in the literature on inference with missing data \cite{seaman_what_2013}, and has also been carried over to the non-parametric context \cite{yang_asymptotic_2020, cheng_nonparametric_1994}. For any $ n \in \mathbb{N}, m \leq n, w \in \Omega$ and $ M \in \mathcal{M}$ there are various methods available for estimating $\mathbb{E}_{n,m} [ \chi_{n,m}(w', \cdot) | O(\chi_{n,m}'(w', M, \cdot)) = O(\chi_{n,m}'(w, M, \cdot))]$ when the MAR assumption is satisfied (\textit{see, e.g.,} \cite{cheng_nonparametric_1994}).

By definition, one can see that if $\chi_{n,m}'$ is MAR, then so are the random elements over $\Omega \times \mathcal{M}$, $\chi_{n,m}'(w, M, \cdot)^{2}$ and $\chi_{n,m}'(w, M, \cdot, \cdot)$, and these elements may be analogously estimated. In the latter case, $\chi_{n,m}'(w, M, \cdot, \cdot)$ is considered observed only on the places where both terms of the tensor product are observed, and missing otherwise. 

Moreover, note that since $w \in \Omega$ and $M \in \mathcal{M}$ determine the set, 

\begin{align*}
    \{w' \in \Omega: O(\chi_{n,m}'(w', M, \cdot)) = O(\chi_{n,m}'(w, M, \cdot))\}    
\end{align*}

for all $n \in \mathbb{N}, m \leq n$, the following shortened notation can intuitively be used:

\begin{align*}
    \mathbb{E}_{n,m} [ \chi_{n,m}(w', \cdot) |  O(\chi_{n,m}'(w', M, \cdot)) = O(\chi_{n,m}'(w, M, \cdot))] 
    \\
    = \mathbb{E}_{n,m} [ \chi_{n,m}(w', \cdot) | w, M]
\end{align*}

and analogously for $\chi_{n,m}'(w, M, \cdot)^{2}$ and $\chi_{n,m}'(w, M, \cdot, \cdot)$ as random elements over $(w, M) \in \Omega \times \mathcal{M}$. 


Note that just as in the completely-observed case, there is a valid inner product for the $\{\chi_{n,m}'\}_{n \in \mathbb{N}, m \leq n}$ random elements, which exist on the separable Hilbert Space $\Omega \times \{0,1\}^{[0,1]}$. Thus, the analogous Lindeberg\textendash Feller and Lyapunov conditions in $H'$ will imply asymptotic normality of the $H'$ random variables. This is the second main result of this paper. The proofs are omitted, as the results follow immediately from this observation. 

\begin{definition}[The Lindeberg\textendash Feller Condition on Partial Hilbert Spaces]
$\forall \varepsilon > 0, \\ \lim_{n \to \infty} \sum_{m=1}^n \mathbb{E}'_{n,m}[ \|\chi_{n,m}' \|_{L^2(\mu)}^2 :  \|\chi_{n,m}' \|_{L^2( \mu)} > \varepsilon ] = 0$
\end{definition}

\begin{theorem}[The Lindeberg\textendash Feller Theorem on Partial Hilbert Spaces]
\label{thm:parlfnorm}
Suppose
\begin{enumerate}
    \item $\exists \varsigma' \in L^2 (\mu \times \mu) : \sum_{m=1}^n \mathcal{K}_{n,m}'$ converges to $\varsigma'$ with respect to \\ $\| \cdot \|_{L^2( \mu \times \mu )}$, and $\lim_{n \to \infty}\sum_{m=1}^n \mathbb{E}_{n,m}'[\|\chi_{n,m}'\|_{L^2 (\mu)}^2] > 0$. 
    
    \item $\forall \varepsilon > 0,  \lim_{n \to \infty} \sum_{m=1}^n \mathbb{E}_{n,m}' [ \|\chi_{n,m}' \|_{L^2(\mu)}^2 :   \|\chi_{n,m}' \|_{L^2( \mu)} > \varepsilon ] = 0 $ \\ (The Lindeberg\textendash Feller Condition on Partial Hilbert Spaces).
\end{enumerate}

Then, $S_n' = \sum_{m=1}^n\chi_{n,m}'$ is asymptotically normal with expectation $\mathbf{0}$ and covariance $\varsigma'$.
\end{theorem}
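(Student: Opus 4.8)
The plan is to reduce Theorem~\ref{thm:parlfnorm} to Theorem~\ref{thm:lfnorm} by verifying that the collection $\{\chi_{n,m}'\}_{n\in\mathbb{N},\,m\le n}$ satisfies, over the enlarged probability space $\Omega\times\mathcal{M}$ carrying the joint law $P_{n,m}'$ of $(w,M)$, every structural hypothesis that Section~\ref{notset} imposes on $\{\chi_{n,m}\}$. Concretely, four things must be checked: (i) each $\chi_{n,m}'$ is an $H$-valued random element on a \emph{separable} probability space; (ii) the $\chi_{n,m}'$ are independent across $(n,m)$; (iii) $\mathbb{E}_{n,m}'[\chi_{n,m}']=\mathbf{0}$; and (iv) $\chi_{n,m}'\bigotimes\chi_{n,m}'$ is Bochner integrable over $\Omega\times\mathcal{M}\times[0,1]^2$, so that $\mathcal{K}_{n,m}'$ is a well-defined trace-class covariance and hypothesis~1 of the theorem is meaningful. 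Granting these, the two hypotheses of Theorem~\ref{thm:parlfnorm} are verbatim the two hypotheses of Theorem~\ref{thm:lfnorm} read off for $\chi_{n,m}'$ — both involve only the scalar $L^2(\mu)$-norm of the realized function — so asymptotic normality of $S_n'$ with mean $\mathbf{0}$ and covariance $\varsigma'$ follows immediately.

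Items (i), (ii) and (iv) are bookkeeping. For (i), $\mathcal{M}\subseteq\{0,1\}^{[0,1]}$ is a product of continuum-many separable spaces, hence separable by the Hewitt--Marczewski--Pondiczery theorem (as already noted in the construction), so $\Omega\times\mathcal{M}$ is a separable probability space; measurability of $(w,M)\mapsto\chi_{n,m}'(w,M,\cdot)$ follows from measurability of $\chi_{n,m}$, of the MAR-estimable conditional-expectation map, and of the imputation kernel $\mathcal{K}_{n,m}^{w,M}$. For (ii), independence across $(n,m)$ is inherited from that of the $\chi_{n,m}$, the $\nu_{n,m}$, and the independent Gaussian noise fields driving the imputation. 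For (iv), Fubini--Tonelli for Bochner integrals together with the $L^2$-contractivity of conditional expectation and the finite trace of $\mathcal{K}_{n,m}^{w,M}$ bounds $\int_{\Omega\times\mathcal{M}\times[0,1]^2}|\chi_{n,m}'\bigotimes\chi_{n,m}'|\,d(P_{n,m}'\times\mu\times\mu)$ by a constant multiple of the corresponding quantity for $\chi_{n,m}$ in item~2 of Section~\ref{notset}.

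The real content is (iii), and it is exactly the "aggregation of conditional expectations over the space of missingness patterns" advertised in the abstract. Fix $x\in[0,1]$ and split $\mathcal{M}$ into $\{M:M(x)=1\}$ and $\{M:M(x)=0\}$. On the former, $\chi_{n,m}'(w,M,x)=\chi_{n,m}(w,x)$, which is precisely $\mathbb{E}_{n,m}[\chi_{n,m}(w',x)\mid w,M]$ since $x$ is observed; on the latter, $\chi_{n,m}'(w,M,x)$ equals that same conditional expectation plus an independent mean-zero Gaussian coordinate whose integral vanishes. Hence, across all of $\mathcal{M}$, the integrand in $\mathbb{E}_{n,m}'[\chi_{n,m}'(\cdot,\cdot,x)]$ is a version of $\mathbb{E}_{n,m}\!\left[\chi_{n,m}(\cdot,x)\mid\sigma\big(O(\chi_{n,m}'(\cdot,M,\cdot)),M\big)\right]$; by the MAR assumption the pattern $M$ carries no information beyond the observed coordinates, so this is a genuine conditional expectation of $\chi_{n,m}(\cdot,x)$, and the tower property gives $\int_{\Omega\times\mathcal{M}}\chi_{n,m}'(w,M,x)\,dP_{n,m}'=\int_\Omega\chi_{n,m}(w,x)\,dP_{n,m}=0$. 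Since this holds for $\mu$-a.e.\ $x$, Fubini--Tonelli upgrades it to $\mathbb{E}_{n,m}'[\chi_{n,m}']=\mathbf{0}$ as an element of $H$.

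I expect step (iii) to be the main obstacle: it is the only place where the MAR hypothesis and the precise form of the imputation rule (conditional mean plus centered Gaussian draw) are actually used, and it requires phrasing the imputation as a bona fide conditional expectation so that the tower property applies. Everything else merely re-reads the Section~\ref{notset} framework — and hence the proof of Theorem~\ref{thm:lfnorm} — with $(\Omega,P_{n,m})$ replaced by $(\Omega\times\mathcal{M},P_{n,m}')$, which is legitimate precisely because the new index space is again a separable Hilbert space.
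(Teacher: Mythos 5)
Your proposal is correct and follows exactly the route the paper intends: the paper omits the proof of Theorem~\ref{thm:parlfnorm} entirely, asserting that it "follows immediately" from the observation that the $\chi_{n,m}'$ are independent, mean-zero random elements of $H$ defined over the separable space $\Omega\times\mathcal{M}$, so Theorem~\ref{thm:lfnorm} applies verbatim with $P_{n,m}$ replaced by the joint law on $\Omega\times\mathcal{M}$. Your verification of the structural hypotheses — in particular the mean-zero property via the smoothing law over $\mathcal{M}$ and the independence/centering of the Gaussian imputation noise — supplies details the paper leaves implicit (and which it only spells out later, in Lemma~\ref{lemma:eq1}, for the covariance rather than the mean), but it is the same reduction.
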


\begin{definition}[The Lyapunov Condition on Partial Hilbert Spaces]
There exists $\delta' > 0$ such that $\lim_{n \to \infty} \sum_{m=1}^{n} \mathbb{E}_{n,m}' [\|\chi_{n,m}'\|_{L^2(\mu)}^{2 + \delta'}] = 0$.
\end{definition}

\begin{theorem}[The Lyapunov Theorem on Partial Hilbert Spaces]
\label{thm:parlyap}
If there exists $\delta' > 0$ such that $\lim_{n \to \infty} \sum_{m=1}^{n} \mathbb{E}_{n,m}' [\|\chi_{n,m}'\|_{L^2( \mu)}^{2 + \delta'}] = 0$, then $\forall \varepsilon > 0$, 

\begin{align*}
    \lim_{n \to \infty} \sum_{m=1}^n \mathbb{E}_{n,m}' [ \|\chi_{n,m}' \|_{L^2( \mu)}^2 :  \|\chi_{n,m}' \|_{L^2(\mu)} > \varepsilon ] = 0
\end{align*}

\end{theorem}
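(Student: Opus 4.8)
The plan is to reproduce the proof of Theorem \ref{thm:lyap} almost verbatim, since moving from the completely observed setting to the partially observed one changes only the underlying (still probabilistic) space $\Omega\times\mathcal{M}$ and the symbol for the expectation, $\mathbb{E}_{n,m}'$; no feature of the unprimed space is used anywhere. The analytic content is a single elementary inequality. Fix $\varepsilon>0$, $n\in\mathbb{N}$, and $m\le n$, and work on the set $E_{n,m}^{\varepsilon}:=\{(w,M)\in\Omega\times\mathcal{M}:\|\chi_{n,m}'(w,M,\cdot)\|_{L^2(\mu)}>\varepsilon\}$. There $\|\chi_{n,m}'\|_{L^2(\mu)}^{\delta'}>\varepsilon^{\delta'}$, so
\begin{align*}
\|\chi_{n,m}'\|_{L^2(\mu)}^{2} \;=\; \frac{\|\chi_{n,m}'\|_{L^2(\mu)}^{2+\delta'}}{\|\chi_{n,m}'\|_{L^2(\mu)}^{\delta'}} \;\le\; \varepsilon^{-\delta'}\,\|\chi_{n,m}'\|_{L^2(\mu)}^{2+\delta'}\qquad\text{on }E_{n,m}^{\varepsilon}.
\end{align*}

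I would then integrate this bound over $E_{n,m}^{\varepsilon}$ with respect to $\mathbb{E}_{n,m}'$ and drop the restriction in the last integrand by non-negativity, obtaining
\begin{align*}
\mathbb{E}_{n,m}'\!\left[\|\chi_{n,m}'\|_{L^2(\mu)}^{2}:\|\chi_{n,m}'\|_{L^2(\mu)}>\varepsilon\right] \;\le\; \varepsilon^{-\delta'}\,\mathbb{E}_{n,m}'\!\left[\|\chi_{n,m}'\|_{L^2(\mu)}^{2+\delta'}:\|\chi_{n,m}'\|_{L^2(\mu)}>\varepsilon\right] \;\le\; \varepsilon^{-\delta'}\,\mathbb{E}_{n,m}'\!\left[\|\chi_{n,m}'\|_{L^2(\mu)}^{2+\delta'}\right].
\end{align*}

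Summing over $m=1,\dots,n$, letting $n\to\infty$, and invoking the Lyapunov Condition on Partial Hilbert Spaces gives
\begin{align*}
0 \;\le\; \limsup_{n\to\infty}\sum_{m=1}^{n}\mathbb{E}_{n,m}'\!\left[\|\chi_{n,m}'\|_{L^2(\mu)}^{2}:\|\chi_{n,m}'\|_{L^2(\mu)}>\varepsilon\right] \;\le\; \varepsilon^{-\delta'}\lim_{n\to\infty}\sum_{m=1}^{n}\mathbb{E}_{n,m}'\!\left[\|\chi_{n,m}'\|_{L^2(\mu)}^{2+\delta'}\right] \;=\; 0,
\end{align*}
and since $\varepsilon>0$ is arbitrary this is precisely the Lindeberg\textendash Feller Condition on Partial Hilbert Spaces, as claimed.

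The only step needing real care is bookkeeping rather than analysis: one must check that $(w,M)\mapsto\|\chi_{n,m}'(w,M,\cdot)\|_{L^2(\mu)}$ is a genuine measurable real-valued function on the separable space $\Omega\times\mathcal{M}$, that each $E_{n,m}^{\varepsilon}$ is measurable, and that the truncated expectations are well defined as integrals over $\Omega\times\mathcal{M}$ against the probability measure $P_{n,m}\times\nu_{n,m}$. Separability of $\Omega\times\mathcal{M}$ (established in the setup via the Hewitt\textendash Marczewski\textendash Pondiczery theorem) together with the explicit construction of $\chi_{n,m}'$ handles the measurability points; finiteness of the $(2+\delta')$-moments is forced by the Lyapunov hypothesis itself (otherwise its limiting sum could not equal $0$), and finiteness of the second moments and of all truncated expectations then follows by Jensen's inequality, $\mathbb{E}_{n,m}'$ being taken against a probability measure. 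Once these are recorded the displayed inequalities are unconditional and the proof is complete; indeed it is word-for-word the proof of Theorem \ref{thm:lyap} with $\chi_{n,m},\mathbb{E}_{n,m},\delta$ replaced by $\chi_{n,m}',\mathbb{E}_{n,m}',\delta'$.
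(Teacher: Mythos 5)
Your proof is correct and matches the paper's intent exactly: the paper omits the proofs of Theorems \ref{thm:parlfnorm} and \ref{thm:parlyap} precisely because they follow by repeating the arguments for the unprimed space verbatim on $\Omega\times\mathcal{M}$, and your argument is the proof of Theorem \ref{thm:lyap} with primes added. The extra remarks on measurability and finiteness of moments are sound but not something the paper bothers to record.
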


An interesting follow-up question is therefore whether asymptotic normality of the partially observed random variables implies the asymptotic normality of the completely observed random elements, and if so, whether the inferences drawn from the partial functional data will mirror those that would have been drawn from complete data. This turns out to be the case. 

The following Theorems \ref{thm:lyapreg} and \ref{thm:cfe} together make up the third main result of this paper: that satisfaction of the Lindeberg\textendash Feller or Lyapunov conditions in the partial observation space implies their satisfaction in the complete space, and that the distributions of the partially observed and completely observed random variables will be asymptotically equivalent, meaning that \textit{all} possible inferences drawn from the former will be consistent. Theorem \ref{thm:lyapreg} shows that if the first condition of Theorem \ref{thm:parlfnorm} is satisfied in the partial observation space, then it is also satisfied in the complete observation space. In particular, this shows that the covariance of the partial-element distribution and the complete-element distribution will be identical. Theorem \ref{thm:cfe} shows that if the Lindeberg\textendash Feller condition is satisfied in the space of partially observed random variables, then it is satisfied in the complete space. Therefore, the distributions of the partially imputed data and complete data will both be asymptotically Gaussian with identical parameters. 

\begin{theorem}
\label{thm:lyapreg}
\begin{enumerate}

    \item If $\exists \varsigma \in L^2(\mu \times \mu) : \sum_{m=1}^n  \mathcal{K}_{n,m}'$ converges to $\varsigma$ with respect to $\| \cdot \|_{L^2( \mu \times \mu)}$, then $\sum_{m=1}^n \mathcal{K}_{n,m}$ converges to $\varsigma$ with respect to $\| \cdot \|_{L^2(\mu \times \mu)}$.
    
    \item If $\lim_{n \to \infty} \sum_{m=1}^n \mathbb{E}_{n,m}'[\|\chi_{n,m}' \| ^2_{L^2(\mu)}] < \infty$, then $\lim_{n \to \infty} \sum_{m = 1}^n \mathbb{E}_{n,m} [ \| \chi_{n,m}  \|^2_{L^2(\mu)}] < \infty$.
\end{enumerate}
\end{theorem}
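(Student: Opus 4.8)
The plan is to reduce everything to one exact identity: the imputation preserves second-order structure, i.e.\ $\mathcal{K}_{n,m}' = \mathcal{K}_{n,m}$ in $L^2(\mu\times\mu)$ for every $n$ and every $m\le n$. Granting this, both parts are immediate. For part~1, $\sum_{m=1}^{n}\mathcal{K}_{n,m}' = \sum_{m=1}^{n}\mathcal{K}_{n,m}$ for each $n$, so these are literally the same sequence of partial sums and hence converge to the same $\varsigma$. For part~2, Fubini--Tonelli (licensed by assumption~2 of Section~\ref{notset}) gives $\mathbb{E}_{n,m}[\|\chi_{n,m}\|_{L^2(\mu)}^2] = \int_{0}^{1}\mathcal{K}_{n,m}(x,x)\,d\mu(x)$ and likewise $\mathbb{E}_{n,m}'[\|\chi_{n,m}'\|_{L^2(\mu)}^2] = \int_{0}^{1}\mathcal{K}_{n,m}'(x,x)\,d\mu(x)$; since $\mathcal{K}_{n,m}' = \mathcal{K}_{n,m}$, the two partial-sum sequences $\sum_{m=1}^{n}\mathbb{E}_{n,m}[\|\chi_{n,m}\|_{L^2(\mu)}^2]$ and $\sum_{m=1}^{n}\mathbb{E}_{n,m}'[\|\chi_{n,m}'\|_{L^2(\mu)}^2]$ coincide, so one has a finite limit iff the other does.

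So the work is entirely in proving $\mathcal{K}_{n,m}' = \mathcal{K}_{n,m}$. Fix $n$ and $m$. Write $f_{x,y}(w'):=\chi_{n,m}(w',x)\chi_{n,m}(w',y)$, and for $M\in\mathcal{M}$ let $\mathcal{G}_M$ be the $\sigma$-algebra on $\Omega$ generated by $w\mapsto O(\chi_{n,m}'(w,M,\cdot))$, so that in the shortened notation $\mathbb{E}_{n,m}[\,\cdot\mid w,M]$ is a regular version of $\mathbb{E}_{n,m}[\,\cdot\mid\mathcal{G}_M]$ (this uses separability of $\Omega$). First I would compute the conditional second moment of $\chi_{n,m}'(w,M,\cdot)$ given $(\mathcal{G}_M,M)$ by splitting $[0,1]^2$ into the four pieces $X_i(M)\times X_j(M)$, $i,j\in\{0,1\}$. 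On $X_1(M)\times X_1(M)$ the product is the $\mathcal{G}_M$-measurable quantity $\chi_{n,m}(w,x)\chi_{n,m}(w,y)$; on the two mixed pieces the cross term involving the mean-zero imputing noise vanishes and one is left with $\chi_{n,m}(w,x)\,\mathbb{E}_{n,m}[\chi_{n,m}(w',y)\mid\mathcal{G}_M]$ (or its transpose); and on $X_0(M)\times X_0(M)$ the conditional mean contributes $\mathbb{E}_{n,m}[\chi_{n,m}(w',x)\mid\mathcal{G}_M]\,\mathbb{E}_{n,m}[\chi_{n,m}(w',y)\mid\mathcal{G}_M]$ while the independent imputing process contributes $\mathcal{K}_{n,m}^{w,M}(x,y)$, the conditional covariance of $(\chi_{n,m}(w',x),\chi_{n,m}(w',y))$ given $\mathcal{G}_M$. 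Using that $\chi_{n,m}(w,\cdot)=\mathbb{E}_{n,m}[\chi_{n,m}(w',\cdot)\mid\mathcal{G}_M]$ exactly on $X_1(M)$ and that $\mathbb{E}_{n,m}[f_{x,y}\mid\mathcal{G}_M]$ factors accordingly whenever one index lies in $X_1(M)$, all four pieces collapse to a single expression, namely
\[
\mathbb{E}\big[\chi_{n,m}'(w,M,x)\,\chi_{n,m}'(w,M,y)\ \big|\ \mathcal{G}_M,M\big]\ =\ \mathbb{E}_{n,m}\big[f_{x,y}\ \big|\ \mathcal{G}_M\big]\qquad(\mu\times\mu\text{-a.e.}).
\]

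It then remains to integrate this over the joint law of $(w,M)$ and recover $\mathcal{K}_{n,m}(x,y)=\mathbb{E}_{n,m}[f_{x,y}]$, and this is the step I expect to be the main obstacle, because the missingness pattern $M$ and the point $w$ are entangled: the conditioning event $\{O(\chi_{n,m}'(\cdot,M,\cdot))=O(\chi_{n,m}'(w,M,\cdot))\}$ is itself built from both $M$ and $w$ (cf.\ the discussion of MAR in \cite{seaman_what_2013}). The MAR assumption is exactly the right hinge: it says that, conditionally on the observed part, $M$ is independent of $w$; equivalently, disintegrating the joint law as $P_{n,m}(dw\mid M)\,\rho_{n,m}(dM)$ and writing $P_{n,m}(dw\mid M)=h(w,M)\,P_{n,m}(dw)$, the density $h(\cdot,M)$ is $\mathcal{G}_M$-measurable for each $M$. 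Then for each $M$, pulling $h(\cdot,M)$ inside the conditional expectation and applying the tower property,
\[
\int_{\Omega}\mathbb{E}_{n,m}[f_{x,y}\mid\mathcal{G}_M](w)\,P_{n,m}(dw\mid M)\ =\ \int_{\Omega} f_{x,y}(w)\,h(w,M)\,P_{n,m}(dw)\ =\ \int_{\Omega} f_{x,y}(w)\,P_{n,m}(dw\mid M),
\]
and integrating against $\rho_{n,m}(dM)$ yields $\mathcal{K}_{n,m}'(x,y)=\int_{\Omega}f_{x,y}\,dP_{n,m}=\mathcal{K}_{n,m}(x,y)$, as desired.

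The remaining technical points to discharge carefully are: existence of the regular conditional probabilities and of the disintegration, and measurability of $M\mapsto X_0(M),X_1(M)$ and of the maps generating $\mathcal{G}_M$ (handled by the standing separability hypotheses together with the Hewitt--Marczewski--Pondiczery remark for $\mathcal{M}$); the Fubini--Tonelli interchanges of $\mathbb{E}_{n,m}$, $\mathbb{E}_{n,m}'$, $\int_{0}^{1}d\mu$ and $\int_{\mathcal{M}}$, all covered by assumption~2 of Section~\ref{notset}; and the fact that the imputing Gaussian process is drawn independently of $(\mathcal{G}_M,M)$ with covariance equal to the \emph{conditional covariance} (conditional second moment minus the outer product of the conditional mean) of the missing coordinates given the observed ones, which is precisely what makes the $X_0(M)\times X_0(M)$ bookkeeping close to $\mathbb{E}_{n,m}[f_{x,y}\mid\mathcal{G}_M]$ rather than overshoot it. With $\mathcal{K}_{n,m}'=\mathcal{K}_{n,m}$ in hand, parts~1 and~2 follow as explained in the first paragraph.
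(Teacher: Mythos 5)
Your proposal is correct and follows essentially the same route as the paper: both reduce the theorem to the exact identity $\mathcal{K}_{n,m}' = \mathcal{K}_{n,m}$ (the paper's Lemma \ref{lemma:eq1}), proved by splitting $[0,1]^2$ into the four observed/missing pieces and using the mean-zero, independent imputation noise together with the tower property over $\Omega\times\mathcal{M}$. The only notable difference is that the paper establishes the second-moment identity $\mathbb{E}_{n,m}'[\|\chi_{n,m}'\|_{L^2(\mu)}^2]=\mathbb{E}_{n,m}[\|\chi_{n,m}\|_{L^2(\mu)}^2]$ by a separate direct computation rather than by restricting the kernel identity to the diagonal; your route is legitimate because your argument yields the identity pointwise rather than merely $\mu\times\mu$-a.e.\ (the diagonal being a null set), and you are in fact more explicit than the paper about where the MAR assumption enters.
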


\begin{theorem}
\label{thm:cfe}
If 

\begin{align*}
    \forall \varepsilon > 0,  \lim_{n \to \infty} \sum_{m=1}^n \mathbb{E}_{n,m}' [ \|\chi_{n,m}' \|_{L^2(\mu)}^2 :   \|\chi_{n,m}' \|_{L^2( \mu)} > \varepsilon ] = 0 
\end{align*}

then,

\begin{align*}
    \forall \varepsilon > 0,  \lim_{n \to \infty} \sum_{m=1}^n \mathbb{E}_{n,m}[ \|\chi_{n,m} \|_{L^2(\mu)}^2 :   \|\chi_{n,m} \|_{L^2(\mu)} > \varepsilon ] = 0 
\end{align*}
\end{theorem}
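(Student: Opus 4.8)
The plan is to route the complete-space Lindeberg--Feller integral through the partial-space one by using, for each missingness pattern $M$, the orthogonal decomposition $L^2([0,1]) = L^2(X_1(M)) \oplus L^2(X_0(M))$. The first move is an ``aggregation over missingness patterns'': since $\chi_{n,m}$ depends only on $w$ and each $\nu_{n,m}(\cdot\mid w)$ is a probability measure, the quantity $\mathbb{E}_{n,m}[\,\|\chi_{n,m}\|_{L^2(\mu)}^2 : \|\chi_{n,m}\|_{L^2(\mu)} > \varepsilon\,]$ is unchanged if $\mathbb{E}_{n,m}$ is replaced by the expectation $\mathbb{E}'_{n,m}$ over $\Omega\times\mathcal{M}$, which is what lets $M$ (hence $X_0(M)$ and $X_1(M)$) enter the analysis of the completely observed variable. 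With that done, set $a := \|\chi_{n,m}\mathbb{1}_{X_1(M)}\|_{L^2(\mu)}^2$ and $b := \|\chi_{n,m}\mathbb{1}_{X_0(M)}\|_{L^2(\mu)}^2$, so $\|\chi_{n,m}\|_{L^2(\mu)}^2 = a+b$, and use the elementary bound $(a+b)\mathbb{1}[a+b>\varepsilon^2] \le 2a\,\mathbb{1}[a>\varepsilon^2/2] + 2b\,\mathbb{1}[b>\varepsilon^2/2]$ to split the target sum into an ``observed piece'' $\sum_m \mathbb{E}'_{n,m}[a : a > \varepsilon^2/2]$ and a ``missing piece'' $\sum_m \mathbb{E}'_{n,m}[b : b > \varepsilon^2/2]$.

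The observed piece is straightforward. Because $O(\chi'_{n,m})$ is precisely the $L^2(X_1(M))$-component of $\chi'_{n,m}$ and coincides there with $\chi_{n,m}$, one has $a = \|O(\chi'_{n,m})\|_{L^2(\mu)}^2 \le \|\chi'_{n,m}\|_{L^2(\mu)}^2$ pointwise on $\Omega\times\mathcal{M}$, so $\{a > \varepsilon^2/2\} \subseteq \{\|\chi'_{n,m}\|_{L^2(\mu)} > \varepsilon/\sqrt 2\}$ and $\sum_m \mathbb{E}'_{n,m}[a : a > \varepsilon^2/2] \le \sum_m \mathbb{E}'_{n,m}[\,\|\chi'_{n,m}\|_{L^2(\mu)}^2 : \|\chi'_{n,m}\|_{L^2(\mu)} > \varepsilon/\sqrt 2\,] \to 0$ by hypothesis. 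The same orthogonal splitting also gives $\|\chi'_{n,m}\|_{L^2(\mu)}^2 = \|O(\chi'_{n,m})\|_{L^2(\mu)}^2 + \|\text{imputed}_{n,m}\|_{L^2(\mu)}^2$, hence the imputed part of $\chi'_{n,m}$ has $L^2(\mu)$-norm at most $\|\chi'_{n,m}\|_{L^2(\mu)}$, and therefore $\sum_m \mathbb{E}'_{n,m}[\,\|\text{imputed}_{n,m}\|_{L^2(\mu)}^2 : \|\text{imputed}_{n,m}\|_{L^2(\mu)} > \delta\,] \to 0$ for every $\delta > 0$ as well; this is the only way in which the missing piece will invoke the hypothesis.

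For the missing piece I would work conditionally on the realized observed part and pattern. Under the MAR assumption the conditional law of $\chi_{n,m}\mathbb{1}_{X_0(M)}$ given the pattern equals its law given only the observed part, so $\mathbb{E}_{n,m}[b \mid w,M] = \operatorname{tr}\mathcal{K}_{n,m}^{w,M}$, while the imputed part is, conditionally, the mean $\mathbb{E}_{n,m}[\chi_{n,m}(w')\mid w,M]$ plus an independent centered Gaussian of covariance $\mathcal{K}_{n,m}^{w,M}$ — so it shares the conditional mean of $\chi_{n,m}\mathbb{1}_{X_0(M)}$ but has the strictly larger conditional second moment $\|\mathbb{E}_{n,m}[\chi_{n,m}(w')\mid w,M]\|^2 + \operatorname{tr}\mathcal{K}_{n,m}^{w,M}$. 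The idea is then to combine the trivial bound $\mathbb{E}_{n,m}[b:b>\delta^2\mid w,M] \le \operatorname{tr}\mathcal{K}_{n,m}^{w,M}$ with a quantitative Gaussian anti-concentration estimate (for a centered Hilbert-space Gaussian $G$ one has $\mathbb{E}[\|G\|^2 : \|G\| > \delta] \ge c\,\operatorname{tr}(\operatorname{Cov} G)$ once $\operatorname{tr}(\operatorname{Cov} G) \ge 4\delta^2$, and a mean shift only increases the second moment): on the event $\{\operatorname{tr}\mathcal{K}_{n,m}^{w,M} \ge 4\delta^2\}$ this bounds the conditional missing integral by a fixed multiple of the conditional imputed-part integral, while on the complementary ``small-trace'' event one integrates $\mathbb{E}_{n,m}[b:b>\delta^2\mid w,M] \le \operatorname{tr}\mathcal{K}_{n,m}^{w,M}$ and uses $\sum_m \mathbb{E}'_{n,m}[\operatorname{tr}\mathcal{K}_{n,m}^{w,M}] = \sum_m \mathbb{E}_{n,m}[b] \le \sum_m \mathbb{E}'_{n,m}[\,\|\chi'_{n,m}\|_{L^2(\mu)}^2\,]$. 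Summing over $m$, sending $n\to\infty$ and then $\delta \downarrow 0$ kills the missing piece, which together with the observed piece gives the claim.

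The step I expect to be the main obstacle is precisely this last comparison for the missing piece. The true conditional law of $\chi_{n,m}\mathbb{1}_{X_0(M)}$ is in general neither stochastically nor convexly dominated by its Gaussian surrogate — a nearly two-valued conditional law can pile more mass just past the truncation level than any matched Gaussian — so the tail comparison cannot be done law-by-law and must be funneled through the second-moment identity $\operatorname{tr}\mathcal{K}_{n,m}^{w,M} = \mathbb{E}_{n,m}[b\mid w,M]$ together with quantitative anti-concentration for the Gaussian imputation. Controlling the contribution of the regime in which the individual ``directional'' second moments $\operatorname{tr}\mathcal{K}_{n,m}^{w,M}$ lie below the truncation scale yet are numerous — so that the truncation bites for the Gaussian but not obviously for $\chi_{n,m}$ — is the delicate point, and is where I would expect the real work of the proof to be concentrated; in particular one must check that the partial Lindeberg--Feller hypothesis genuinely controls the aggregate $\sum_m \mathbb{E}_{n,m}[b]$ and not merely each term separately.
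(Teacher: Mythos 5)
Your splitting into an observed piece and a missing piece is fine as far as it goes, and the observed piece is handled correctly: $\|\chi_{n,m}\mathbb{1}_{X_1(M)}\|_{L^2(\mu)}^2 \le \|\chi'_{n,m}\|_{L^2(\mu)}^2$ pointwise on $\Omega\times\mathcal{M}$, so that term is killed by the hypothesis. But the gap you flag in the missing piece is not merely delicate\textemdash the argument does not close. The partial-space hypothesis constrains the tails of $\chi'_{n,m}$, whose component on $X_0(M)$ is a conditional mean plus a Gaussian; it therefore reaches the true unobserved values $\chi_{n,m}\mathbb{1}_{X_0(M)}$ only through the conditional second moment $\operatorname{tr}\mathcal{K}^{w,M}_{n,m}=\mathbb{E}_{n,m}[b\mid w,M]$, and a conditional second moment does not control a truncated conditional second moment. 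In your small-trace regime $\{\operatorname{tr}\mathcal{K}^{w,M}_{n,m}<4\delta^2\}$ the bound you propose is $\sum_{m}\mathbb{E}'_{n,m}[\operatorname{tr}\mathcal{K}^{w,M}_{n,m}\,\mathbb{1}(\operatorname{tr}\mathcal{K}^{w,M}_{n,m}<4\delta^2)]$, which is controlled only by $\sum_m\mathbb{E}'_{n,m}[\|\chi'_{n,m}\|_{L^2(\mu)}^2]$; that quantity is bounded but (by hypothesis 1 of Theorem \ref{thm:parlfnorm}) bounded away from zero, and since the small-trace event depends on $n$, sending $\delta\downarrow 0$ after $n\to\infty$ provides no monotone-convergence rescue. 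Concretely: if, conditionally on $(w,M)$, the unobserved part takes a huge value with tiny probability and is zero otherwise, then every $\operatorname{tr}\mathcal{K}^{w,M}_{n,m}$ sits below any fixed $\delta^2$ while $\mathbb{E}_{n,m}[b:b>\delta^2\mid w,M]=\operatorname{tr}\mathcal{K}^{w,M}_{n,m}$ exactly, so the missing piece retains its full mass, which the Gaussian surrogate (concentrated below $\delta$) cannot see. No anti-concentration estimate for the imputation can supply information that the hypothesis does not contain.

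For contrast, the paper does not attempt a tail-by-tail comparison at all. Its proof of Theorem \ref{thm:cfe} runs through Lemma \ref{lemma:eq1}, which establishes the exact \emph{untruncated} second-moment identities $\mathcal{K}'_{n,m}=\mathcal{K}_{n,m}$ and $\mathbb{E}'_{n,m}[\|\chi'_{n,m}\|_{L^2(\mu)}^2]=\mathbb{E}_{n,m}[\|\chi_{n,m}\|_{L^2(\mu)}^2]$ by conditioning on $M$, applying the smoothing law over $\Omega$, and aggregating over $\mathcal{M}$; it then argues that every invocation of the Lindeberg quantity in the proof of Theorem \ref{thm:lfnorm}, starting from equation \eqref{eqn:link}, can be rerouted through the primed quantities. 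So the two arguments are genuinely different: you try to prove the stated implication between truncated second moments directly, while the paper substitutes second-moment identities into the normality argument rather than deriving the complete-space Lindeberg condition itself. The obstruction you ran into is real evidence that the direct implication requires more than matching second moments, and you should not expect to repair your missing-piece step without importing additional structure on the conditional law of the unobserved values.
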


The key technique for proving these two results is to first condition the random elements with respect to $M \in \mathcal{M}$, then apply the smoothing law to an expectation over $\Omega$, and then aggregate the conditional expectations over $\mathcal{M}$. This is essentially a particular way of integrating over $\Omega \times \mathcal{M}$. This paper appears to be the first to apply this technique to the context of partially observed functional data, and shows how it may be useful in future research. This technique is illustrated in the Appendix. 

Theorems \ref{thm:lyapreg} and \ref{thm:cfe}, taken together, provide boundedness conditions which imply that the partially observed elements and the completely observed elements will have asymptotically identical distributions, because both will be asymptotically normal with the same covariance operator. Thus, any inferences drawn from the former, such as hypothesis tests results and confidence intervals \cite{kraus_components_2015, kraus_inferential_2019}, will approximate those that would have been drawn from the latter. 

\section{Appendix}
\textbf{Proof of Lemma \ref{lemma:taucont}:}\textit{If $\exists \varsigma \in L^2 (\mu \times \mu) : \sum_{m=1}^n \mathcal{K}_{n,m}$ converges to $\varsigma$ with respect to $\| \cdot \|_{L^2(\mu \times \mu)}$, then, $G: L^2(\mu \times \mu) \to \mathbb{R}$ such that $G(\psi) = \langle \varsigma, \psi \rangle _{L^2(\mu \times \mu)}$ is uniformly $\tau$-continuous near zero.}

\begin{proof}
By Section 1 (example 1) in \cite{gross_integration_1960}, it is sufficient to show that $G = r(K\psi)$ where $r: H \to \mathbb{R}$ such that (1) $r$ is uniformly continuous in the norm topology on bounded sets, and (2) $K$ is a Hilbert-Schmidt Operator. 

\begin{enumerate}
    \item Let $\psi \in H, r(\cdot) = \int_{[0,1]} (\cdot)(\psi)$. Let $\varepsilon >0.$ $\forall c_1, c_2 \in H : \|c_1 - c_2\|_{L^2(\mu)} < \frac{\varepsilon}{\|\psi\|_{L^2(\mu)}},$ 
    
    \begin{align*}
        |r(c_1) - r(c_2)| 
        \\ = \bigg| \int_{[0,1]}c_1\psi - \int_{[0,1]}c_2\psi \bigg| 
        \\\leq \int_{[0,1]} |(c_1 - c_2) (\psi)| 
        \\\leq \|c_1 - c_2\|_{L^2(\mu)} \| \psi\|_{L^2(\mu)} \ \text{by H\"older's inequality}
        \\
        < \varepsilon
    \end{align*}
    Hence, $r$ is uniformly continuous in the norm topology on bounded sets. 
    
    \item Let $x \in [0,1], K_x(\cdot) = \int_{[0,1]} \varsigma(x,y) (\cdot(y)) dy$. Note that
    \begin{align*}
        \int_{[0,1]} \int_{[0,1]}|\varsigma|^2  \leq \|\varsigma\|_{L^2(\mu \times \mu)}^2
        \
        \text{by H\"older's Inequality}
        \\
        < \infty 
    \end{align*}
    
      Thus, $K$ is a Hilbert\textendash Schmidt integral operator. 
\end{enumerate}
\end{proof}

Prior to proving Theorem \ref{thm:lfnorm}, a few preliminary lemmas are needed. 

\begin{lemma}
\label{lemma:prod}
If $\max_{1 \leq j \leq n} |c_{j,n}| \to 0, \sum_{j=1}^n c_{j,n} \to \lambda,$ and $\sup_{n}\sum_{j=1}^n|c_{j,n}| < \infty$, then $\prod_{j=1}^n (1 + c_{j,n}) \to e^\lambda$.
\end{lemma}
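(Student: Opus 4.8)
The plan is to compare $\prod_{j=1}^n(1+c_{j,n})$ with the ``exponentiated'' product $\prod_{j=1}^n e^{c_{j,n}}=\exp\!\big(\sum_{j=1}^n c_{j,n}\big)$. The latter converges to $e^{\lambda}$ by continuity of $\exp$ together with the hypothesis $\sum_{j=1}^n c_{j,n}\to\lambda$, so it suffices to show that the difference of the two products tends to $0$ as $n\to\infty$.

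First I would record the elementary bounds that do all the work. Since $\max_{1\le j\le n}|c_{j,n}|\to 0$, there is $N$ such that for $n\ge N$ every $|c_{j,n}|\le 1$; on the disk $|z|\le 1$ one has $|e^{z}-1-z|\le\sum_{k\ge 2}|z|^{k}/k!\le|z|^{2}$, and for all $z\in\mathbb C$ one has $|1+z|\le 1+|z|\le e^{|z|}$ and $|e^{z}|=e^{\operatorname{Re}z}\le e^{|z|}$. Setting $C:=\sup_n\sum_{j=1}^n|c_{j,n}|<\infty$, these give, for every $k$, the control $\prod_{j<k}|1+c_{j,n}|\cdot\prod_{j>k}|e^{c_{j,n}}|\le\exp\!\big(\sum_{j\ne k}|c_{j,n}|\big)\le e^{C}$.

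Next I would expand the difference by the usual telescoping identity (empty products equal to $1$):
\[
\prod_{j=1}^n(1+c_{j,n})-\prod_{j=1}^n e^{c_{j,n}}
=\sum_{k=1}^n\Big(\prod_{j<k}(1+c_{j,n})\Big)\big((1+c_{k,n})-e^{c_{k,n}}\big)\Big(\prod_{j>k}e^{c_{j,n}}\Big).
\]
Taking absolute values and inserting the bounds above, the $k$-th summand is at most $e^{C}|c_{k,n}|^{2}$, hence
\[
\Big|\prod_{j=1}^n(1+c_{j,n})-\prod_{j=1}^n e^{c_{j,n}}\Big|
\le e^{C}\sum_{k=1}^n|c_{k,n}|^{2}
\le e^{C}\Big(\max_{1\le k\le n}|c_{k,n}|\Big)\sum_{k=1}^n|c_{k,n}|
\le e^{C}\,C\,\max_{1\le k\le n}|c_{k,n}|\;\longrightarrow\;0 .
\]
Combining this with $\prod_{j=1}^n e^{c_{j,n}}\to e^{\lambda}$ yields $\prod_{j=1}^n(1+c_{j,n})\to e^{\lambda}$.

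I expect the only genuinely load-bearing point to be keeping the ``spectator'' partial products $\prod_{j\ne k}(\cdots)$ uniformly bounded in the telescoping sum: this is exactly where the third hypothesis $\sup_n\sum_j|c_{j,n}|<\infty$ is used, via $|1+z|,|e^{z}|\le e^{|z|}$, and without it the $n$ error terms could accumulate faster than the $\sum_k|c_{k,n}|^{2}=o(1)$ gain; everything else is the squaring trick $\sum_k|c_{k,n}|^{2}\le(\max_k|c_{k,n}|)\sum_k|c_{k,n}|$. An equivalent route is to take principal logarithms---legitimate once all $|c_{j,n}|<1/2$, so that $1+c_{j,n}\ne 0$---and use $|\log(1+z)-z|\le|z|^{2}$ to get $\sum_{j=1}^n\log(1+c_{j,n})=\sum_{j=1}^n c_{j,n}+O\!\big(\sum_{j=1}^n|c_{j,n}|^{2}\big)\to\lambda$, then exponentiate; this is the same estimate with the exponential moved to the outside.
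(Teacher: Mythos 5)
Your proof is correct, and it is more than the paper provides: the paper does not prove this lemma at all, but simply cites Chapter 3 of Durrett, where the standard argument runs through principal logarithms (the route you sketch at the end: $|\log(1+z)-z|\le|z|^2$ for $|z|$ small, so $\sum_j\log(1+c_{j,n})\to\lambda$, then exponentiate). Your main argument instead telescopes $\prod_j(1+c_{j,n})$ against $\prod_j e^{c_{j,n}}=\exp\bigl(\sum_j c_{j,n}\bigr)$, controls the spectator products by $|1+z|,|e^z|\le e^{|z|}$ together with $\sup_n\sum_j|c_{j,n}|=C<\infty$, and kills the error via $\sum_k|c_{k,n}|^2\le\bigl(\max_k|c_{k,n}|\bigr)\cdot C\to 0$. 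All three hypotheses are used exactly where they should be, and the estimates $|e^z-1-z|\le|z|^2$ on $|z|\le 1$ and the telescoping identity are stated and applied correctly. What your route buys over the logarithmic one is that it works directly for complex $c_{j,n}$ without ever needing $1+c_{j,n}\ne 0$ or a choice of branch (those issues are harmless once $\max_j|c_{j,n}|<1/2$, but your version sidesteps them entirely); the two arguments are otherwise the same second-order Taylor estimate packaged differently.
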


\begin{proof}
Proof in Chapter 3 of \cite{durrett_probability_2019}.  
\end{proof}

\begin{lemma}
$\bigg|e^{ix} - \sum_{m=1}^n \frac{(ix)^m}{m!}\bigg| \leq \min \bigg(\frac{|x|^{n+1}}{(n+1)!}, \frac{2|x|^n}{n!}\bigg)$.
\end{lemma}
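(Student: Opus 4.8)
The plan is to reproduce the classical Taylor--remainder estimate (Chapter~3 of \cite{durrett_probability_2019}), reading the sum as running from $m=0$ (the standard form of the statement). Write $g_n(x) := e^{ix} - \sum_{m=0}^{n}\frac{(ix)^m}{m!}$; I will bound $|g_n|$ by each of the two quantities separately and then combine them into the minimum. The one identity I would record at the outset is that term-by-term differentiation gives $g_n'(x) = i\,g_{n-1}(x)$ with $g_n(0)=0$, hence $g_n(x) = i\int_0^x g_{n-1}(s)\,ds$ and therefore $|g_n(x)| \le \int_0^{|x|}|g_{n-1}(s)|\,ds$. The reduction to $x\ge 0$ in this last step is harmless, since $g_{n-1}(-s)=\overline{g_{n-1}(s)}$ for real $s$, so $|g_{n-1}|$ is even. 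I would also note the trivial identity $g_n(x) = g_{n-1}(x) - \frac{(ix)^n}{n!}$, immediate from the definition.

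The first bound, $|g_n(x)| \le \frac{|x|^{n+1}}{(n+1)!}$, I would prove by induction on $n$. For $n=0$ we have $g_0(x)=e^{ix}-1 = i\int_0^x e^{is}\,ds$, so $|g_0(x)|\le|x|$. For the inductive step, assuming $|g_{n-1}(s)|\le\frac{|s|^{n}}{n!}$, the displayed integral inequality gives $|g_n(x)|\le\int_0^{|x|}\frac{s^n}{n!}\,ds = \frac{|x|^{n+1}}{(n+1)!}$.

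The second bound, $|g_n(x)| \le \frac{2|x|^n}{n!}$, then follows with essentially no further work: for $n=0$ it is just $|e^{ix}-1|\le 2$, and for $n\ge 1$ the identity $g_n(x) = g_{n-1}(x) - \frac{(ix)^n}{n!}$, together with the first bound applied at index $n-1$ (namely $|g_{n-1}(x)|\le\frac{|x|^n}{n!}$) and the triangle inequality, gives $|g_n(x)|\le\frac{|x|^n}{n!}+\frac{|x|^n}{n!}=\frac{2|x|^n}{n!}$. Since both bounds hold simultaneously, so does their minimum, which is exactly the claim.

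I do not anticipate any real obstacle here: this is a routine estimate, and the entire argument amounts to the base case plus the two short lines above. The only point deserving a word of care is the sign of $x$ in the passage to $\int_0^{|x|}$, which is dispatched by the evenness of $|g_{n-1}|$ noted above (or, equivalently, by observing that every bound carried through the induction depends on $s$ only through $|s|$). An alternative route of the same length would start from Taylor's formula with integral remainder, $g_n(x)=\frac{i^{n+1}}{n!}\int_0^x(x-s)^n e^{is}\,ds$, and bound that directly for the first estimate; I would keep the inductive version as the main proof since it never needs to introduce the integral remainder.
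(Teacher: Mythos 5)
Your proof is correct and is essentially the standard argument that the paper defers to by citing Chapter 3 of \cite{durrett_probability_2019}: Durrett bounds the integral remainder $\frac{i^{n+1}}{n!}\int_0^x(x-s)^n e^{is}\,ds$ via integration by parts, which yields exactly the recursion $g_n(x)=i\int_0^x g_{n-1}(s)\,ds$ that you run as an induction, so the two routes are interchangeable. You are also right to read the sum as starting at $m=0$: as literally printed, with $\sum_{m=1}^n$, the statement fails already at $x=0$ (left side $1$, right side $0$), so the missing constant term is a typo in the paper rather than a gap in your argument.
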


\begin{proof}
Proof in Chapter 3 of \cite{durrett_probability_2019}.
\end{proof}

\begin{lemma}
\label{lemma:mineq}
Let $\chi$ be an $H-$valued random variable, $\psi \in H$, and $\phi_{\chi}(\psi):= \mathbb{E}[e^{i\langle \psi, \chi \rangle}]$ be the characteristic functional of $\chi$. If $\mathbb{E}[\|\chi\|_{L^2(\mu)}^2] < \infty$, then $\phi_{\chi}(\psi) = 1 + i\mathbb{E}[\langle \psi,\chi \rangle_{L^2(\mu)}] - \frac{\mathbb{E}[\langle \psi,\chi \rangle_{L^2(\mu)}^2]}{2} + o(\|\psi\|_{L^2(\mu)}^2)$, where $o(\|\psi\|_{L^2(\mu)}^2)$ indicates a quantity $j(\|\psi\|_{L^2(\mu)})$ such that $\frac{j(\|\psi\|_{L^2(\mu)})}{\|\psi\|_{L^2(\mu)}^2} \to 0 $ as $\|\psi\|_{L^2(\mu)} \to 0$
\end{lemma}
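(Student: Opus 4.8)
The plan is to imitate the classical second\textendash order Taylor expansion of a characteristic function, substituting the scalar random variable $\langle\psi,\chi\rangle_{L^2(\mu)}$ for the usual argument and then pushing expectations through the pointwise remainder bound furnished by the preceding lemma. First I would fix $\psi\in H$ and apply that lemma with $n=2$ to the real number $x=\langle\psi,\chi(\cdot)\rangle_{L^2(\mu)}$ at each sample point, obtaining the pointwise estimate
$$\left|e^{i\langle\psi,\chi\rangle_{L^2(\mu)}}-1-i\langle\psi,\chi\rangle_{L^2(\mu)}+\tfrac{1}{2}\langle\psi,\chi\rangle_{L^2(\mu)}^2\right|\le \min\!\left(\frac{|\langle\psi,\chi\rangle_{L^2(\mu)}|^3}{6},\,|\langle\psi,\chi\rangle_{L^2(\mu)}|^2\right).$$
By the Cauchy\textendash Schwarz inequality $|\langle\psi,\chi\rangle_{L^2(\mu)}|\le\|\psi\|_{L^2(\mu)}\|\chi\|_{L^2(\mu)}$, so the hypothesis $\mathbb{E}[\|\chi\|_{L^2(\mu)}^2]<\infty$ (together with Jensen's inequality, which also gives $\mathbb{E}[\|\chi\|_{L^2(\mu)}]<\infty$) guarantees that $\mathbb{E}[\langle\psi,\chi\rangle_{L^2(\mu)}]$, $\mathbb{E}[\langle\psi,\chi\rangle_{L^2(\mu)}^2]$, and the right\textendash hand side above are all finite. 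Taking expectations and using the triangle inequality for integrals then yields
$$\left|\phi_\chi(\psi)-1-i\mathbb{E}[\langle\psi,\chi\rangle_{L^2(\mu)}]+\tfrac{1}{2}\mathbb{E}[\langle\psi,\chi\rangle_{L^2(\mu)}^2]\right|\le \mathbb{E}\!\left[\min\!\left(\frac{|\langle\psi,\chi\rangle_{L^2(\mu)}|^3}{6},\,|\langle\psi,\chi\rangle_{L^2(\mu)}|^2\right)\right].$$

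The crux is to show the right\textendash hand side is $o(\|\psi\|_{L^2(\mu)}^2)$. The key manipulation is to rewrite the minimum as $|\langle\psi,\chi\rangle_{L^2(\mu)}|^2\cdot\min(|\langle\psi,\chi\rangle_{L^2(\mu)}|/6,\,1)$ and bound it, again by Cauchy\textendash Schwarz, by $\|\psi\|_{L^2(\mu)}^2\,\|\chi\|_{L^2(\mu)}^2\,\min(\|\psi\|_{L^2(\mu)}\|\chi\|_{L^2(\mu)}/6,\,1)$. Dividing through by $\|\psi\|_{L^2(\mu)}^2$ reduces the claim to
$$\mathbb{E}\!\left[\|\chi\|_{L^2(\mu)}^2\,\min\!\left(\frac{\|\psi\|_{L^2(\mu)}\|\chi\|_{L^2(\mu)}}{6},\,1\right)\right]\longrightarrow 0\quad\text{as }\|\psi\|_{L^2(\mu)}\to 0.$$
Here I would invoke the Dominated Convergence Theorem: the integrand is dominated by the integrable function $\|\chi\|_{L^2(\mu)}^2$, and at every sample point with $\|\chi\|_{L^2(\mu)}<\infty$ (which holds almost surely) the factor $\min(\|\psi\|_{L^2(\mu)}\|\chi\|_{L^2(\mu)}/6,1)\to 0$ as $\|\psi\|_{L^2(\mu)}\to 0$; applying this along an arbitrary sequence $\psi_k$ with $\|\psi_k\|_{L^2(\mu)}\to 0$ gives the stated limit. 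Combining with the displayed inequality produces exactly $\phi_\chi(\psi)=1+i\mathbb{E}[\langle\psi,\chi\rangle_{L^2(\mu)}]-\tfrac{1}{2}\mathbb{E}[\langle\psi,\chi\rangle_{L^2(\mu)}^2]+o(\|\psi\|_{L^2(\mu)}^2)$.

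The argument is essentially routine once the pointwise remainder bound is in hand; the only point requiring genuine care \textemdash\ and the place I expect to spend the most effort \textemdash\ is the last step, namely isolating the factor $\|\psi\|_{L^2(\mu)}^2$ so that what remains both vanishes and is dominated. Writing the minimum as $|\langle\psi,\chi\rangle_{L^2(\mu)}|^2$ times a uniformly bounded factor tending to $0$ is precisely what makes the Dominated Convergence Theorem applicable, and it is worth verifying that the ``$o$'' is genuinely uniform over the randomness (it is, since after dividing by $\|\psi\|_{L^2(\mu)}^2$ the bounding function no longer depends on $\psi$ except through the vanishing factor). No infinite\textendash dimensional subtleties enter this particular lemma: because $\langle\psi,\chi\rangle_{L^2(\mu)}$ is a scalar random variable, every expectation above is an ordinary Lebesgue integral, and separability or Radon\textendash Nikodym considerations play no role here.
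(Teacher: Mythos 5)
Your proposal is correct and follows exactly the route the paper intends: the paper's own proof is the one-line remark that the lemma ``follows immediately from the previous Lemma, and by properties of the inner product,'' and your argument is precisely the fleshed-out version of that --- apply the pointwise bound $\bigl|e^{ix}-\sum_{m=1}^{2}\frac{(ix)^m}{m!}\bigr|\le\min\bigl(\frac{|x|^3}{6},|x|^2\bigr)$ to the scalar $x=\langle\psi,\chi\rangle_{L^2(\mu)}$, take expectations, and control the remainder via Cauchy--Schwarz and dominated convergence. If anything, your write-up is more careful than the paper's, particularly in isolating the factor $\|\psi\|_{L^2(\mu)}^2$ so that the dominated-convergence step is legitimate.
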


\begin{proof}
This follows immediately from the previous Lemma, and by properties of the inner product.  
\end{proof}

\begin{lemma}
\label{lemma:modulus}
Let $z_1,...,z_n, w_1,..., w_n \in \mathbb{C}$ with modulus $\leq \theta$. Then, $|\prod_{m=1}^nz_m - \prod_{m=1}^nw_m |$ $ \leq \theta ^{n-1}\sum_{m=1}^n|z_m - w_n|$.
\end{lemma}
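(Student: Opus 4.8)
The plan is to prove this by the standard telescoping argument, exactly as in Chapter~3 of \cite{durrett_probability_2019}, either by induction on $n$ or, more transparently, via a single explicit identity. (Note that the intended right-hand side is $\theta^{n-1}\sum_{m=1}^{n}|z_m-w_m|$; the subscript on the last term in the statement is a typo.)

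Taking the inductive route, I would first dispose of the case $n=1$, where $\theta^{n-1}=\theta^{0}=1$ and the inequality reads $|z_1-w_1|\le|z_1-w_1|$. For the inductive step, assuming the bound for $n-1$, I would insert and subtract the mixed product $z_n\prod_{m=1}^{n-1}w_m$, writing
\begin{align*}
\prod_{m=1}^{n}z_m-\prod_{m=1}^{n}w_m
= z_n\left(\prod_{m=1}^{n-1}z_m-\prod_{m=1}^{n-1}w_m\right)+(z_n-w_n)\prod_{m=1}^{n-1}w_m.
\end{align*}
Applying the triangle inequality together with $|z_n|\le\theta$, the elementary bound $\prod_{m=1}^{n-1}|w_m|\le\theta^{n-1}$, and the induction hypothesis $\bigl|\prod_{m=1}^{n-1}z_m-\prod_{m=1}^{n-1}w_m\bigr|\le\theta^{n-2}\sum_{m=1}^{n-1}|z_m-w_m|$ yields
\begin{align*}
\left|\prod_{m=1}^{n}z_m-\prod_{m=1}^{n}w_m\right|
\le \theta\cdot\theta^{n-2}\sum_{m=1}^{n-1}|z_m-w_m|+\theta^{n-1}|z_n-w_n|
= \theta^{n-1}\sum_{m=1}^{n}|z_m-w_m|,
\end{align*}
which closes the induction.

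Alternatively, and probably cleaner to present, I would bypass induction and use the telescoping identity
\begin{align*}
\prod_{m=1}^{n}z_m-\prod_{m=1}^{n}w_m
= \sum_{k=1}^{n}\left(\prod_{m=1}^{k-1}w_m\right)(z_k-w_k)\left(\prod_{m=k+1}^{n}z_m\right),
\end{align*}
with empty products set to $1$; bounding the modulus of the $k$-th summand by $\theta^{k-1}\,|z_k-w_k|\,\theta^{n-k}=\theta^{n-1}|z_k-w_k|$ and summing over $k$ gives the claim at once. There is no substantive obstacle here: the only points requiring care are the degenerate cases ($n=1$, and $\theta=0$ under the convention $0^{0}=1$) and the bookkeeping of empty products in the telescoping sum.
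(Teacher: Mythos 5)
Your proof is correct, and it is exactly the standard telescoping/induction argument from Chapter 3 of Durrett that the paper simply cites for this lemma (you also rightly note that the $|z_m - w_n|$ in the statement should read $|z_m - w_m|$). Nothing further is needed.
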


\begin{proof}
Proof in Chapter 3 of \cite{durrett_probability_2019}.
\end{proof}

\textbf{Proof of Theorem \ref{thm:lfnorm}:} \textit{Suppose \begin{enumerate}
    \item $\exists \varsigma \in L^2 (\mu \times \mu) : \sum_{m=1}^n \mathcal{K}_{n,m}$ converges to $\varsigma$ with respect to $\| \cdot \|_{L^2(\mu \times \mu)}$, and $\lim_{n \to \infty}\sum_{m=1}^n \mathbb{E}_{n,m}[\|\chi_{n,m}\|_{L^2 (\mu)}^2] > 0$. 
    \item $\forall \varepsilon > 0,  \lim_{n \to \infty} \sum_{m=1}^n \mathbb{E}_{n,m}[ \|\chi_{n,m} \|_{L^2(\mu)}^2 :   \|\chi_{n,m} \|_{L^2(\mu)} > \varepsilon ] = 0 $ \\ (The Lindeberg\textendash Feller Condition).
\end{enumerate} Then, $S_n = \sum_{m=1}^n\chi_{n,m}$ is asymptotically normal with expectation $\mathbf{0}$ and covariance $\varsigma$.}

\begin{proof}
With Lemmas \ref{lemma:grossconthm} and \ref{lemma:taucont}, this proof is now able to closely track the proof of the classical Lindeberg\textendash Feller Theorem given in Chapter 3 of  \cite{durrett_probability_2019}. Let $\phi_{n,m}(g) = \mathbb{E}_{n,m}[e^{i\langle g, \chi_{n,m} \rangle_{L^2(\mu)}}]$ be the characteristic functional of $\chi_{n,m}$. By Lemma \ref{lemma:grossconthm}, it suffices to show that: 

$\prod_{m=1}^n \phi_{n,m}(g) \to e^{- \frac{\langle g, \varsigma \rangle_{L^2(\mu)} ^2}{2}}$ in probability, since the limit is the characteristic functional of a Gaussian Process with expectation $\textbf{0}$.

Let $z_{n,m} = \phi_{n,m}(g)$ and $w_{n,m} = 1 - \frac{\mathbb{E}_{n,m} [\langle g, \chi_{n,m} \rangle_{L^2(\mu)}^2]}{2}$. By Lemma \ref{lemma:mineq},

\begin{align}
\label{eqn:link}
    \|z_{n,m} -  w_{n,m}\|_{L^2(\mu)} 
    \\
    \leq \mathbb{E}_{n,m}[\min \{|\langle g, \chi_{n,m} \rangle_{L^2 (\mu)} |^ 3 , 2| \langle g, \chi_{n,m} \rangle_{L^2 (\mu)} | ^2 \}] 
    \nonumber
    \\
    \leq \mathbb{E}_{n,m}[|\langle g, \chi_{n,m} \rangle_{L^2 (\mu)} | ^3 : \|\chi_{n,m}\|_{L^2 (\mu)} \leq \varepsilon ] 
    \nonumber
    \\ 
    + \mathbb{E}_{n,m}[2|\langle g, \chi_{n,m} \rangle_{L^2 (\mu)}  |^2 : \|\chi_{n,m} \|_{L^2 (\mu)} > \varepsilon ] 
    \nonumber
    \\
    \leq \mathbb{E}_{n,m}[ \|g\|_{L^2 (\mu)}^3 \|\chi_{n,m}\|_{L^2 (\mu)} ^3 : \|\chi_{n,m}\|_{L^2 (\mu)} \leq \varepsilon ] 
    \nonumber
    \\ 
    + \mathbb{E}_{n,m}[2 \|g\|_{L^2 (\mu)}^2 \|\chi_{n,m}\|_{L^2 (\mu)} ^2 : \|\chi_{n,m} \|_{L^2 (\mu)} > \varepsilon ] 
    \nonumber
    \\
    \text{by the Cauchy-Schwarz Inequality}
    \nonumber
    \\
    \leq \|g\|_{L^2 (\mu)}^3\varepsilon \mathbb{E}_{n,m}[  \|\chi_{n,m}\|_{L^2 (\mu)} ^2 : \|\chi_{n,m}\|_{L^2 (\mu)} \leq \varepsilon ] 
    \nonumber
    \\ 
    + 2 \|g\|_{L^2 (\mu)}^2\mathbb{E}_{n,m}[ \|\chi_{n,m}\|_{L^2 (\mu)} ^2 : \|\chi_{n,m} \|_{L^2 (\mu)} > \varepsilon ]
    \nonumber
\end{align}

Summing from $m = 1$ to $n$, letting $n \to \infty$, and using the hypotheses of the Theorem, the following holds: 

\begin{align*}
    \limsup_{n \to \infty} \sum_{m=1}^n \| z_{n,m} - w_{n,m}\|_{L^2 (\mu)} \leq \limsup_{n \to \infty} \varepsilon \|g\|_{L^2 (\mu)}^3 \sum_{m=1}^n\mathbb{E}_{n,m}\big[\|\chi_{n,m}\|_{L^2 (\mu)} ^2\big]
\end{align*}

Since $\varepsilon$ is arbitrary, 

$$\lim_{n \to \infty} \sum_{m=1}^n \| z_{n,m} - w_{n,m}\|_{L^2 (\mu)} = 0 $$

The next step is to use Lemma \ref{lemma:modulus} with $\theta = 1$ to obtain: 

$$\bigg\| \prod_{m=1}^n \phi_{n,m}(g) - \prod_{m=1}^n w_{n,m}\bigg\|_{L^2 (\mu)} \to 0$$

To check the hypotheses of Lemma \ref{lemma:modulus}, note that since $\phi_{n,m}$ is a characteristic functional, $\|\phi_{n,m}\|_{L^2(\mu)} \leq 1$ for all $n,m$. For the terms in the second product, note that: 

\begin{align*}
    \mathbb{E}_{n,m} [\langle g, \chi_{n,m} \rangle_{L^2(\mu)}^2] =  \mathbb{E}_{n,m} [|\langle g, \chi_{n,m} \rangle_{L^2(\mu)}|^2]
    \\
    \leq \mathbb{E}_{n,m} [\|g\|_{L^2(\mu)}^2 \|\chi_{n,m}\|_{L^2(\mu)}^2] \ \text{by the Cauchy\textendash Schwarz inequality}
    \\
    \leq \|g\|_{L^2(\mu)}^2 (\varepsilon^2 + \mathbb{E}_{n,m} [ \|\chi_{n,m}\|_{L^2(\mu)}^2 : \|\chi_{n,m}\|_{L^2(\mu)} \geq \varepsilon])
\end{align*}

Further, (ii) implies that $\sup_{m \leq n} \mathbb{E}_{n,m}[\|\chi_{n,m}\|^2_{L^2(\mu)}] \to 0$ as $n \to \infty$, so $\varepsilon^2 + \mathbb{E}_{n,m} [ \|\chi_{n,m}\|_{L^2(\mu)}^2 : \|\chi_{n,m}\|_{L^2(\mu)} \geq \varepsilon] \to 0 $ as $n\to \infty$. Thus, as $n \to \infty$, $\| w_{n,m} \|_{L^2(\mu)} \leq 1 \ \forall m \leq n$.

Now, let $c_{n,m} = -\frac{1}{2}\mathbb{E}_{n,m}[\langle \chi_{n,m}, g \rangle_{L^2(\mu)}^2]$. As shown above, $$\sup_{m \leq n} \mathbb{E}_{n,m}[\|\chi_{n,m}\|^2_{L^2(\mu)}] \to 0$$ as $n \to \infty$. Moreover, by hypothesis 1, 

$$\lim_{n \to \infty}\sum_{m=1}^n c_{n,m} = -\frac{1}{2} \langle \varsigma, g \rangle _{L^2( \mu \times \mu)}$$

Thus, by Lemma \ref{lemma:prod},

$$\lim_{n \to \infty}\prod_{m=1}^n \bigg(1 -\frac{1}{2}\mathbb{E}_{n,m}[\langle \chi_{n,m}, g \rangle_{L^2(\mu)}^2]\bigg) = e^{\langle \varsigma, g \rangle _{L^2(\mu \times \mu)}}$$

in probability, where the right hand side is the characteristic functional of a Gaussian process. 

Applying Lemma \ref{lemma:grossconthm} completes the proof. 
\end{proof}

\textbf{Proof of Theorem \ref{thm:lyap}:} \textit{If there exists $\delta > 0$ such that $\lim_{n \to \infty} \sum_{m=1}^{n} \mathbb{E}_{n,m}[\|\chi_{n,m}\|_{L^2(\mu)}^{2 + \delta}] = 0$, then $\forall \varepsilon > 0,  \lim_{n \to \infty} \sum_{m=1}^n \mathbb{E}_{n,m}[ \|\chi_{n,m} \|_{L^2(\mu)}^2 :  \|\chi_{n,m} \|_{L^2(\mu)} > \varepsilon ] = 0$.}

\begin{proof}
Let $\varepsilon > 0$, $\chi \in H : \|\chi\|_{L^2(\mu)} > \varepsilon.$ Then, 
\begin{align*}
    \|\chi\|_{L^2(\mu)} ^2 = \frac{\|\chi\|_{L^2(\mu)}^{2 + \delta}}{\|\chi\|_{L^2(\mu)}^{ \delta}} \leq \frac{\|\chi\|_{L^2(\mu)}^{2 + \delta}}{\varepsilon^\delta}
\end{align*}

Thus, $\forall n \in \mathbb{N}, m \leq n$,

\begin{align*}
    \mathbb{E}_{n,m}[\|\chi_{n,m}\|_{L^2(\mu)}^2 : \|\chi_{n,m}\|_{L^2(\mu)} \geq \varepsilon] 
    \\
    \leq \mathbb{E}_{n,m}\bigg[\frac{\|\chi_{n,m}\|_{L^2(\mu)}^{2 + \delta}}{\varepsilon^\delta}\bigg]
\end{align*}

and therefore that

\begin{align*}
    \lim_{n \to \infty} \sum_{m=1}^{n} \mathbb{E}_{n,m}[\|\chi_{n,m}\|_{L^2(\mu)}^2 : \|\chi_{n,m}\|_{L^2(\mu)} \geq \varepsilon] 
    \\
    \leq \lim_{n \to \infty} \sum_{m=1}^{n} \mathbb{E}_{n,m}\bigg[\frac{\|\chi_{n,m}\|_{L^2(\mu)}^{2 + \delta}}{\varepsilon^\delta}\bigg]
    \\
    = \frac{1}{\varepsilon^\delta} \lim_{n \to \infty} \sum_{m=1}^{n} \mathbb{E}_{n,m}[\|\chi_{n,m}\|_{L^2(\mu)}^{2 + \delta}]
    \\
    = 0
\end{align*}
\end{proof}

\begin{lemma}
\label{lemma:eq1}
Let $n \in \mathbb{N}, m \leq n$. 

\begin{enumerate}
    \item $\mathcal{K}_{n,m}' = \mathcal{K}_{n,m}$
    
    \item$\mathbb{E}_{n,m}'[\| \chi_{n,m}' \|_{L^2(\mu)} ^ 2] = \mathbb{E}_{n,m}[\| \chi_{n,m} \|_{L^2(\mu)} ^ 2]$
\end{enumerate}
\end{lemma}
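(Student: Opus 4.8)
The plan is to prove part 1 — equality of the covariance operators of $\chi'_{n,m}$ and $\chi_{n,m}$ — by the conditioning-and-aggregation technique flagged in the text, and then to deduce part 2 by restricting to the diagonal and integrating.

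First I would fix $n \in \mathbb{N}$, $m \leq n$ and, for each $M \in \mathcal{M}$, expand $\chi'_{n,m}(w,M,x)\chi'_{n,m}(w,M,y)$ using the definition of $\chi'_{n,m}$: on $X_1(M)\times X_1(M)$ it equals $\chi_{n,m}(w,x)\chi_{n,m}(w,y)$; on $X_1(M)\times X_0(M)$ (and symmetrically on $X_0(M)\times X_1(M)$) it is $\chi_{n,m}(w,x)$ times the imputed value at $y$; and on $X_0(M)\times X_0(M)$ it is the product of the two imputed values, each equal to $\mathbb{E}_{n,m}[\chi_{n,m}(w',\cdot)\mid w,M]$ plus an independent centered Gaussian realization with conditional covariance $\mathcal{K}^{w,M}_{n,m}$. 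For fixed $M$, I would then apply the smoothing law over $\Omega$ by further conditioning on the observed part $O(\chi'_{n,m}(w,M,\cdot))$. This is where the MAR assumption does its work: it guarantees that $\nu_{n,m}(M\mid w)$ depends on $w$ only through the observed part, so that conditioning on $M$ in addition to the observed part is the same as conditioning on the observed part alone, i.e. $\mathbb{E}_{n,m}[\,\cdot\mid M,O] = \mathbb{E}_{n,m}[\,\cdot\mid O]$ for functionals of $\chi_{n,m}$. Combined with the elementary identity that a centered Gaussian with covariance $\mathcal{K}^{w,M}_{n,m}$ makes the imputed value reproduce the conditional second moments of $\chi_{n,m}$, a short case-by-case check over the four index-location regimes yields, for $\mu\times\mu$-a.e. $(x,y)$,
$$\mathbb{E}_{n,m}\big[\chi'_{n,m}(w,M,x)\chi'_{n,m}(w,M,y)\,\big|\,M\big] = \mathbb{E}_{n,m}\big[\chi_{n,m}(w,x)\chi_{n,m}(w,y)\,\big|\,M\big].$$

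Next I would aggregate over $\mathcal{M}$: integrating both sides against $\nu_{n,m}$ and using the tower property over $\Omega\times\mathcal{M}$, the right-hand side collapses to $\mathbb{E}_{n,m}[\chi_{n,m}(w,x)\chi_{n,m}(w,y)] = \mathcal{K}_{n,m}(x,y)$ since its integrand does not depend on $M$, while the left-hand side is $\mathcal{K}'_{n,m}(x,y)$ by definition of the covariance operator of $\chi'_{n,m}$; hence $\mathcal{K}'_{n,m} = \mathcal{K}_{n,m}$ in $L^2(\mu\times\mu)$, which is part 1. For part 2 I would take $y=x$ in part 1, integrate over $[0,1]$, and invoke Fubini–Tonelli for Bochner integrals — legitimate by the integrability assumption in Section \ref{notset} — to interchange $\mathbb{E}'_{n,m}$ with $\int_{[0,1]}$, obtaining $\mathbb{E}'_{n,m}[\|\chi'_{n,m}\|^2_{L^2(\mu)}] = \int_{[0,1]}\mathcal{K}'_{n,m}(x,x)\,dx = \int_{[0,1]}\mathcal{K}_{n,m}(x,x)\,dx = \mathbb{E}_{n,m}[\|\chi_{n,m}\|^2_{L^2(\mu)}]$.

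The step I expect to be the main obstacle is the measure-theoretic bookkeeping around $\mathcal{M} = \{0,1\}^{[0,1]}$: making precise the conditioning on the random observed set $X_1(M)$ and the attendant regular conditional distributions, verifying that the MAR condition really does deliver the clean statement $\mathbb{E}_{n,m}[\,\cdot\mid M,O]=\mathbb{E}_{n,m}[\,\cdot\mid O]$ in the required almost-sure sense, and checking that the Bochner integrals over $\Omega\times\mathcal{M}\times[0,1]^2$ are finite so that Fubini–Tonelli applies and $\mathcal{K}'_{n,m}$ is a well-defined element of $L^2(\mu\times\mu)$. The algebraic content — expanding $\chi'_{n,m}$, the four-case computation, and the Gaussian second-moment identity — should be routine once this conditioning framework is set up.
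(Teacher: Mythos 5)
Your proposal is correct and follows essentially the same route as the paper: the paper likewise expands $\chi'_{n,m}(w,M,x_1)\chi'_{n,m}(w,M,x_2)$ into the four observed/imputed cross terms, conditions on $M$, applies the smoothing law together with the independence of the Gaussian imputation noise, and then aggregates over $\mathcal{M}$. The only difference is in part 2, where the paper recomputes $\mathbb{E}'_{n,m}[\|\chi'_{n,m}\|^2_{L^2(\mu)}]$ directly rather than restricting part 1 to the diagonal; your shortcut is fine provided you observe that the four-case identity holds pointwise in $(x_1,x_2)$ (which your computation in fact gives), since an equality known only $\mu\times\mu$-almost everywhere would not survive restriction to the null diagonal.
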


\begin{proof}
	Let $n \in \mathbb{N}, m \leq n$.
	
	\begin{enumerate}
		\item Let $(x_1, x_2) \in [0,1]^2$.
		
		\begin{align*}
		    \mathcal{K}_{n,m}'(x_1, x_2)
		    \\
			= \mathbb{E}_{n,m}' [\chi_{n,m}'(w,M,x_1) \chi_{n,m}'(w,M, x_2)]
			\\
			= \mathbb{E}_{n,m}'[(\chi_{n,m}(w,x_1) \mathbb{1}_{X_1(M)} (x_1) + (\mathbb{E}_{n,m}[\chi_{n,m} (w', x_1) | 		w,M] 
	        \\
	        + \mathcal{G}(0, \mathcal{K}_{n,m}^{w,M})(x_1))\mathbb{1}_{X_0(M)}(x_1)) (\chi_{n,m}(w, x_2) \mathbb{1}_{X_1(M)} (x_2) 	+ (\mathbb{E}_{n,m}[\chi_{n,m} (w', x_2) | w,M] 
	        \\
	        + \mathcal{G}(0, \mathcal{K}_{n,m}^{w,M}))(x_2)\mathbb{1}_{X_0(M)}(x_2))]
	        \\
	        = \mathbb{E}_{n,m}'[(\alpha + \beta ) (\gamma + \delta)]
		\end{align*}
		
		where  
	    \begin{align*}
	    \hfill
    		\alpha = \chi_{n,m}(w,x_1) \mathbb{1}_{X_1(M)} (x_1)
		    \\
        	\beta = (\mathbb{E}_{n,m}[\chi_{n,m} (w', x_1) | w,M] + \mathcal{G}(0, \mathcal{K}_{n,m}^{w,M})(x_1))\mathbb{1}_{X_0(M)}(x_1)
        	\\
        	\gamma = \chi_{n,m}(w, x_2) \mathbb{1}_{X_1(M)} (x_2) 
        	\\
        	\delta = (\mathbb{E}_{n,m}[\chi_{n,m} (w', x_2) | w,M] + \mathcal{G}(0, \mathcal{K}_{n,m}^{w,M})(x_2))\mathbb{1}_{X_0(M)}(x_2)
        \end{align*}
    
    Observe that:
    
    \begin{enumerate}
    	\item 
		\begin{align*}
			\mathbb{E}_{n,m}' [\alpha \gamma]
			\\
			=\mathbb{E}_{n,m}' [ \chi_{n,m}(w,x_1) \mathbb{1}_{X_1(M)} (x_1) \chi_{n,m}(w, x_2) \mathbb{1}_{X_1(M)} (x_2) ]
			\\
			= \mathbb{E}_{\mathcal{M}} [\mathbb{1}_{X_1(M)} (x_1) \mathbb{1}_{X_1(M)}(x_2) \mathbb{E}_{n,m} [\chi_{n,m}(w, x_1) \chi_{n,m}(w, x_2) | M ]]
		\end{align*}
		
	\item
		\begin{align*}
			\mathbb{E}_{n,m}'[\alpha \delta]
			\\
			=\mathbb{E}_{n,m}'[ \chi_{n,m}(w,x_1) \mathbb{1}_{X_1(M)} (x_1) (\mathbb{E}_{n,m}[\chi_{n,m} (w', x_2) | w,M] 
			\\
			+ \mathcal{G}(0, \mathcal{K}_{n,m}^{w,M})(x_2))\mathbb{1}_{X_0(M)}(x_2)]
			\\
			=\mathbb{E}_{\mathcal{M}} [ \mathbb{1}_{X_1(M)}(x_1) \mathbb{1}_{X_0(M)}(x_2) \mathbb{E}_{n,m}[(\chi_{n,m}(w,x_1) )(\mathbb{E}_{n,m}[\chi_{n,m} (w', x_2) | w,M] 
			\\
			+ \mathcal{G}(0, \mathcal{K}_{n,m}^{w,M})(x_2)) | M ]]
			\\
			=\mathbb{E}_{\mathcal{M}} [\mathbb{1}_{X_1(M)}(x_1) \mathbb{1}_{X_0(M)}(x_2) \mathbb{E}_{n,m}[\mathbb{E}_{n,m}[\chi_{n,m}(w,x_1) \chi_{n,m} (w', x_2) | w,M] 
			\\
			+ \chi_{n,m}(w,x_1)\mathcal{G}(0, \mathcal{K}_{n,m}^{w,M})(x_2) | M ]]
			\\
			=\mathbb{E}_{\mathcal{M}} [\mathbb{1}_{X_1(M)}(x_1) \mathbb{1}_{X_0(M)}(x_2) \mathbb{E}_{n,m}[\mathbb{E}_{n,m}[\chi_{n,m}(w,x_1) \chi_{n,m} (w', x_2) | w,M] | M ]]
			\\
			\text{by linearity and independence of $\chi_{n,m}(w,x_1)$ and $\mathcal{G}(0, \mathcal{K}_{n,m}^{w,M})(x_2)$}
			\\
			=\mathbb{E}_{\mathcal{M}} [\mathbb{1}_{X_1(M)}(x_1) \mathbb{1}_{X_0(M)}(x_2) \mathbb{E}_{n,m}[\chi_{n,m}(w,x_1) \chi_{n,m} (w, x_2) | M ]
			\\
			\text{by the Smoothing Law.}
		\end{align*}
		
		\item Analogously, 
			\begin{align*}
				\mathbb{E}_{n,m}'[\beta \gamma]
				\\
				= \mathbb{E}_{\mathcal{M}} [\mathbb{1}_{X_0(M)}(x_1) \mathbb{1}_{X_1(M)}(x_2) \mathbb{E}_{n,m}[\chi_{n,m}(w,x_1) \chi_{n,m} (w, x_2) | M ]
			\end{align*}
		
		\item 
			\begin{align*}
				\mathbb{E}_{n,m}'[\beta \delta]
				\\
				= \mathbb{E}_{n,m}'[ (\mathbb{E}_{n,m}[\chi_{n,m} (w', x_1) | w,M] + \mathcal{G}(0, \mathcal{K}_{n,m}^{w,M})(x_1))\mathbb{1}_{X_0(M)}(x_1) 
				\\
				 (\mathbb{E}_{n,m}[\chi_{n,m} (w', x_2) | w,M] + \mathcal{G}(0, \mathcal{K}_{n,m}^{w,M})(x_2))\mathbb{1}_{X_0(M)}(x_2)] 
				 \\
				 =\mathbb{E}_{\mathcal{M}} [\mathbb{E}_{n,m} [\mathbb{1}_{X_0(M)}(x_1)\mathbb{1}_{X_0(M)}(x_2) (\mathbb{E}_{n,m} [\chi_{n,m}(w', x_1) |w,M]  \mathbb{E}_{n,m} [
				 \\
				\chi_{n,m}(w',x_2)|w,M] + \mathcal{G}(0, \mathcal{K}_{n,m}^{w,M})(x_1)\mathcal{G}(0, \mathcal{K}_{n,m}^{w,M})(x_2)) |M]] 
				 \\
				 \text{by linearity and independence of $\mathbb{E}_{n,m} [\chi_{n,m}(w',x_2)|w,M] $} 
				 \\
				 \text{and $\mathcal{G}(0, \mathcal{K}_{n,m}^{w,M})(x_1)$, and likewise $\mathbb{E}_{n,m} [\chi_{n,m}(w', x_1) |w,M]$}
				 \\
				 \text{and $\mathcal{G}(0, \mathcal{K}_{n,m}^{w,M})(x_2)$}
				 \\
				 =\mathbb{E}_{\mathcal{M}} [ \mathbb{1}_{X_0(M)}(x_1)\mathbb{1}_{X_0(M)}(x_2) \mathbb{E}_{n,m} [\chi_{n,m} (w, x_1) \chi_{n,m} (w, x_2) | M ] ]
				 \\
				 \text{by the Smoothing Law and by properties of the covariance}
				 \\
				 \text{operator (Theorem 7.2.4 in \cite{hsing_theoretical_2015}).}
			\end{align*}
	Thus, 
	
	\begin{align*}
		\mathbb{E}_{n,m}'[(\alpha + \beta ) (\gamma + \delta)]
		\\
		= \mathbb{E}_{n,m} [\chi_{n,m} (w, x_1) \chi_{n,m} (w, x_2)] 
		\\
		\text{by linearity and the Smoothing Law.}
		\\
		= \mathcal{K}_{n,m}(x_1, x_2)
	\end{align*}
    \end{enumerate}
	
	\item 
		\begin{align*}
 		\mathbb{E}_{n,m}'[\| \chi_{n,m}' \|_{L^2(\mu)} ^ 2] 
        	\\
        	= \int_{[0,1]} \mathbb{E}_{n,m}' [ (\chi'_{n,m})^{2} ] \ \text{by the Fubinin-Tonelli Theorem}
        	\\
	        = \int_{[0,1]} \mathbb{E}_{n,m}' [( \chi_{n,m} (w, \cdot) \mathbb{1}_{X_1(M)}(\cdot) 
	        \\
	        + (\mathbb{E}_{n,m}[\chi_{n,m} (w', \cdot) | w,M] + \mathcal{G}(0, \mathcal{K}_{n,m}^{w,M})(\cdot)) \mathbb{1}_{X_0(M)}(\cdot))^2]
	        \\
	        = \int_{[0,1]} ( \mathbb{E}_{n,m}' [ \chi_{n,m} (w, \cdot) ^2 \mathbb{1}_{X_1(M)}(\cdot)]
	        \\
	        + \mathbb{E}_{n,m}' [ (\mathbb{E}_{n,m}[\chi_{n,m} (w', \cdot) | w,M] + \mathcal{G}(0, \mathcal{K}_{n,m}^{w,M})(\cdot))^2 \mathbb{1}_{X_0(M)} (\cdot)] )
	        \\
	        \text{by linearity and the fact that $X_1(M)$ and $X_0(M)$ are disjoint for all $M \in \mathcal{M}$.}
	        \\
	        = \int_{[0,1]} ( \mathbb{E}_{n,m}' [ \chi_{n,m} (w, \cdot) ^2 \mathbb{1}_{X_1(M)}(\cdot)]
	        \\
	        + \mathbb{E}_{n,m}' [( \mathbb{E}_{n,m}[\chi_{n,m} (w', \cdot) | w,M]^2 + \mathcal{G}(0, 			\mathcal{K}_{n,m}^{w,M})(\cdot)^2) \mathbb{1}_{X_0(M)}(\cdot)] )
	        \\
	        \text{by linearity and independence}
	        \\
	        = \int_{[0,1]} ( \mathbb{E}_{n,m}' [ \chi_{n,m} (w, \cdot) ^2 \mathbb{1}_{X_1(M)}] + \mathbb{E}_{n,m}' [ \mathbb{E}		_{n,m}[\chi_{n,m} (w', \cdot) ^2| w,M]\mathbb{1}_{X_0(M)}] )
	        \\
	        \text{by properties of the covariance operator (Theorem 7.2.4 in \cite{hsing_theoretical_2015}).}
	        \\
	        = \int_{[0,1]} ( \mathbb{E}_{\mathcal{M}}[\mathbb{E}_{n,m} [ \chi_{n,m} (w, \cdot)^2 \mathbb{1}_{X_1(M)} | M] 
	        \\
	        + \mathbb{E}_{\mathcal{M}} [ \mathbb{E}_{n,m} [  \mathbb{E}_{n,m} [ \chi_{n,m} (w', \cdot)^2 \mathbb{1}_{X_0(M)}| w,M]  |M ] ]])
	        \\
	        = \int_{[0,1]} \mathbb{E}_{\mathcal{M}}[\mathbb{E}_{n,m} [ \chi_{n,m} (w, \cdot)^2 \mathbb{1}_{X_1(M)} | M] + 	\mathbb{E}_{n,m} [ \chi_{n,m} (w, \cdot)^2 \mathbb{1}_{X_0(M)}|M ] ]
	        \\
	        \text{by linearity and the Smoothing Law}.
 	       \\
 	       = \int_{[0,1]} \mathbb{E}_{\mathcal{M}}[\mathbb{E}_{n,m} [ \chi_{n,m} (w, \cdot)^2 \mathbb{1}_{X_1(M)} + \chi_{n,m} (w, \cdot)^2 \mathbb{1}_{X_0(M)}|M ] ]
 	       \\
 	       \text{by linearity}.
 	       \\
	       	 =\mathbb{E}_{n,m}[\| \chi_{n,m} \|_{L^2(\mu)} ^ 2]
	        \\
	        \text{by the Smoothing Law and Fubini\textendash Tonelli Theorem.}
		\end{align*}
	
	\end{enumerate}
\end{proof}

\textbf{Proof of Theorem \ref{thm:lyapreg}:} 
\begin{enumerate}
    \item If $\exists \varsigma \in L^2(\mu \times \mu) : \sum_{m=1}^n  \mathcal{K}_{n,m}'$ converges to $\varsigma$ with respect to $\| \cdot \|_{L^2( \mu \times \mu)}$, then $\sum_{m=1}^n \mathcal{K}_{n,m}$ converges to $\varsigma$ with respect to $\| \cdot \|_{L^2(\mu \times \mu)}$.
    
    \item If $\lim_{n \to \infty} \sum_{m=1}^n \mathbb{E}_{n,m}'[\|\chi_{n,m}' \| ^2_{L^2(\mu)}] < \infty$, then $\lim_{n \to \infty} \sum_{m = 1}^n \mathbb{E}_{n,m} [ \| \chi_{n,m}  \|^2_{L^2(\mu)}] < \infty$.
\end{enumerate}

\begin{proof} 
These results follow immediately from Lemma \ref{lemma:eq1}

\end{proof}

\textbf{Proof of Theorem \ref{thm:cfe}:} 

If 

\begin{align*}
    \forall \varepsilon > 0,  \lim_{n \to \infty} \sum_{m=1}^n \mathbb{E}_{n,m}' [ \|\chi_{n,m}' \|_{L^2(\mu)}^2 :   \|\chi_{n,m}' \|_{L^2( \mu)} > \varepsilon ] = 0 
\end{align*}

then,

\begin{align*}
    \forall \varepsilon > 0,  \lim_{n \to \infty} \sum_{m=1}^n \mathbb{E}_{n,m}[ \|\chi_{n,m} \|_{L^2(\mu)}^2 :   \|\chi_{n,m} \|_{L^2(\mu)} > \varepsilon ] = 0 
\end{align*}

\begin{proof}
By Lemma \ref{lemma:mineq}, it is sufficient to show that for any $\psi \in H$, the following hold 

\begin{enumerate} 

	\item $\mathbb{E}_{n,m}'[\| \chi_{n,m}' \|_{L^2(\mu)} ^ 2] < \infty$, $\implies$ $\mathbb{E}_{n,m}[\| \chi_{n,m} \|_{L^2(\mu)} ^ 2] < \infty$
	
	
	\item $\mathbb{E}_{n,m}' [ \langle \psi, \chi_{n,m}' \rangle_{L^2(\mu)} ^2 ]  = \mathbb{E}_{n,m} [ \langle \psi, \chi_{n,m} \rangle_{L^2(\mu)} ^2 ]$
 
\end{enumerate}

since this allows us to replace $\mathbb{E}_{n,m} [ \langle g, \chi_{n,m} \rangle_{L^2(\mu)} ^2 ]$ in the proof of Theorem \ref{thm:lfnorm} (beginning with equation \ref{eqn:link}) with $\mathbb{E}_{n,m}' [ \langle g, \chi_{n,m}' \rangle_{L^2(\mu)} ^2 ] $.

\begin{enumerate}
    \item 
    \begin{align*}
         \infty > \mathbb{E}_{n,m}'[\| \chi_{n,m}' \|_{L^2(\mu)} ^ 2] 
         \\
         = \mathbb{E}_{n,m}[\| \chi_{n,m} \|_{L^2(\mu)} ^ 2]
         \\
         \text{by Lemma \ref{lemma:eq1}.}
    \end{align*}
        
    \item
    \begin{align*}
        \mathbb{E}_{n,m}'[\langle \psi, \chi_{n,m}' \rangle_{L^2(\mu)}^2]
        \\
        = \mathbb{E}_{n,m}' \bigg[ \bigg(\int_{[0,1]} \psi \chi_{n,m}' \bigg)^2 \bigg]
        \\
        \mathbb{E}_{n,m}' \bigg[\int_{(x_1, x_2) \in [0,1]^2} \psi(x_2)\psi(x_1) \chi_{n,m}'(w,M, x_1)\chi_{n,m}'(w, M, x_2) \bigg]
        \\
        \text{by the Fubini\textendash Tonelli Theorem}
        \\
        = \int_{[0,1]^2} \psi(x_1)\psi(x_2) \mathbb{E}_{n,m}' [\chi_{n,m}'(w,M,x_1) \chi_{n,m}'(w,M, x_2)] 
        \\
        \text{by the Fubini\textendash Tonelli Theorem}
        \\
        = \int_{[0,1]^2} \psi(x_1)\psi(x_2) \mathbb{E}_{n,m} [\chi_{n,m}(w,M,x_1) \chi_{n,m}(w,M, x_2)] 
        \\
        \text{by Lemma \ref{lemma:eq1}}
        \\
        =\mathbb{E}_{n,m}[\langle \psi, \chi_{n,m} \rangle_{L^2(\mu)} ^2 ] 
        \\
    	\text{by the Fubini\textendash Tonelli Theorem.}
    \end{align*}
\end{enumerate}
\end{proof}

\bibliography{references}

\begin{thebibliography}{}

\bibitem[Chakraborty and Chaudhuri, 2014]{chakraborty_spatial_2014}
Chakraborty, A. and Chaudhuri, P. (2014).
\newblock The spatial distribution in infinite dimensional spaces and related
  quantiles and depths.
\newblock {\em The Annals of Statistics}, 42(3):1203--1231.
\newblock Publisher: Institute of Mathematical Statistics.

\bibitem[Cheng, 1994]{cheng_nonparametric_1994}
Cheng, P.~E. (1994).
\newblock Nonparametric {Estimation} of {Mean} {Functionals} with {Data}
  {Missing} at {Random}.
\newblock {\em Journal of the American Statistical Association},
  89(425):81--87.
\newblock Publisher: Taylor \& Francis \_eprint:
  https://www.tandfonline.com/doi/pdf/10.1080/01621459.1994.10476448.

\bibitem[Delaigle and Hall, 2016]{delaigle_approximating_2016}
Delaigle, A. and Hall, P. (2016).
\newblock Approximating fragmented functional data by segments of {Markov}
  chains.
\newblock {\em Biometrika}, 103(4):779--799.

\bibitem[Durrett, 2019]{durrett_probability_2019}
Durrett, R. (2019).
\newblock {\em Probability: {Theory} and {Examples}}.
\newblock Cambridge University Press.
\newblock Google-Books-ID: vESPDwAAQBAJ.

\bibitem[Feldman, 1965]{feldman_short_1965}
Feldman, J. (1965).
\newblock A short proof of the levy continuity theorem in {Hilbert} space.
\newblock {\em Israel Journal of Mathematics}, 3(2):99--103.

\bibitem[Giné and León, 1980]{gine_central_1980}
Giné, E. and León, J.~R. (1980).
\newblock On the central limit theorem in {Hilbert} space.
\newblock {\em Stochastica}, 4(1):43--71.

\bibitem[Giné and Nickl, 2021]{gine_mathematical_2021}
Giné, E. and Nickl, R. (2021).
\newblock {\em Mathematical {Foundations} of {Infinite}-{Dimensional}
  {Statistical} {Models}}.
\newblock Cambridge University Press.
\newblock Google-Books-ID: AoQiEAAAQBAJ.

\bibitem[Gross, 1960]{gross_integration_1960}
Gross, L. (1960).
\newblock Integration and {Nonlinear} {Transformations} {In} {Hilbert} {Space}.
\newblock {\em Transactions of the American Mathematical Society},
  94(3):404--440.
\newblock Publisher: American Mathematical Society.

\bibitem[Hsing and Eubank, 2015]{hsing_theoretical_2015}
Hsing, T. and Eubank, R. (2015).
\newblock {\em Theoretical {Foundations} of {Functional} {Data} {Analysis},
  with an {Introduction} to {Linear} {Operators}}.
\newblock John Wiley \& Sons.
\newblock Google-Books-ID: h6MKCAAAQBAJ.

\bibitem[Kandelaki and Sozanov, 1964]{kandelaki_central_1964}
Kandelaki, N.~P. and Sozanov, V.~V. (1964).
\newblock On a {Central} {Limit} {Theorem} for {Random} {Elements} with
  {Values} in {Hilbert} {Space}.
\newblock {\em Theory of Probability \& Its Applications}, 9(1):38--46.
\newblock Publisher: Society for Industrial and Applied Mathematics.

\bibitem[Kneip and Liebl, 2020]{kneip_optimal_2020}
Kneip, A. and Liebl, D. (2020).
\newblock On the optimal reconstruction of partially observed functional data.
\newblock {\em The Annals of Statistics}, 48(3):1692--1717.
\newblock Publisher: Institute of Mathematical Statistics.

\bibitem[Kraus, 2015]{kraus_components_2015}
Kraus, D. (2015).
\newblock Components and completion of partially observed functional data.
\newblock {\em Journal of the Royal Statistical Society. Series B (Statistical
  Methodology)}, 77(4):777--801.
\newblock Publisher: [Royal Statistical Society, Wiley].

\bibitem[Kraus, 2019]{kraus_inferential_2019}
Kraus, D. (2019).
\newblock Inferential procedures for partially observed functional data.
\newblock {\em Journal of Multivariate Analysis}, 173:583--603.

\bibitem[Mikusiński, 1978]{mikusinski_fubini_1978}
Mikusiński, J. (1978).
\newblock The {Fubini} {Theorem}.
\newblock In Mikusiński, J., editor, {\em The {Bochner} {Integral}},
  Mathematische {Reihe}, pages 91--105. Birkhäuser, Basel.

\bibitem[Osipov and Rotar’, 1985]{osipov_infinite-dimensional_1985}
Osipov, L.~V. and Rotar’, V.~I. (1985).
\newblock On an {Infinite}-{Dimensional} {Central} {Limit} {Theorem}.
\newblock {\em Theory of Probability \& Its Applications}, 29(2):375--383.
\newblock Publisher: Society for Industrial and Applied Mathematics.

\bibitem[Prokhorov and (Eds, 2000]{prokhorov_limit_2000}
Prokhorov, Y. and (Eds, S. (2000).
\newblock {\em Limit {Theorems} of {Probability} {Theory}}.

\bibitem[Seaman et~al., 2013]{seaman_what_2013}
Seaman, S., Galati, J., Jackson, D., and Carlin, J. (2013).
\newblock What {Is} {Meant} by “{Missing} at {Random}”?
\newblock {\em Statistical Science}, 28(2):257--268.
\newblock Publisher: Institute of Mathematical Statistics.

\bibitem[Sur and Candès, 2019]{sur_modern_2019}
Sur, P. and Candès, E.~J. (2019).
\newblock A modern maximum-likelihood theory for high-dimensional logistic
  regression.
\newblock {\em Proceedings of the National Academy of Sciences},
  116(29):14516--14525.
\newblock Publisher: Proceedings of the National Academy of Sciences.

\bibitem[Wang et~al., 2019]{wang_does_2019}
Wang, Y.-t., Mechkova, V., and Andersson, F. (2019).
\newblock Does {Democracy} {Enhance} {Health}? {New} {Empirical} {Evidence}
  1900–2012.
\newblock {\em Political Research Quarterly}, 72(3):554--569.
\newblock Publisher: SAGE Publications Inc.

\bibitem[Yang and Kim, 2020]{yang_asymptotic_2020}
Yang, S. and Kim, J.~K. (2020).
\newblock Asymptotic theory and inference of predictive mean matching
  imputation using a superpopulation model framework.
\newblock {\em Scandinavian Journal of Statistics}, 47(3):839--861.
\newblock \_eprint: https://onlinelibrary.wiley.com/doi/pdf/10.1111/sjos.12429.

\end{thebibliography}

\end{document}